\appto{\bibsetup}{\emergencystretch=0.5em}
\newcommand{\N}{\mathds N}
\newcommand{\Z}{\mathds Z}
\newcommand{\Q}{\mathds Q}
\newcommand{\C}{\mathds C}
\newcommand{\id}{\mathds{1}}
\let\oldbigwedge\bigwedge
\renewcommand{\bigwedge}{\oldbigwedge\nolimits}
\newcommand{\chuse}[2]{\left[\begin{array}{c} #1 \\ #2\end{array}\right]}
\newtheorem{thm}{Theorem}[section]
\newtheorem{lem}[thm]{Lemma}
\newtheorem{cor}[thm]{Corollary}\newtheorem{prop}[thm]{Proposition}
\theoremstyle{definition}\newtheorem*{definition}{Definition}\newtheorem{remark}[thm]{Remark}
\newtheorem*{acknowledgement}{Acknowledgement}
\newtheorem*{thm*}{Theorem}
\numberwithin{equation}{subsection}
\address{Dept of Mathematical Sciences\\ Durham University\\ Science Laboratories\\ South Rd.\\ Durham\\ DH1 3LE\\ UK}
\email{jonathan.grant@durham.ac.uk}
\begin{document}
\title{A generators and relations description of a representation category of $U_q(\mathfrak{gl}(1|1))$}
\author{Jonathan Grant}
\begin{abstract}
We use the technique of quantum skew Howe duality to investigate the monoidal category generated by exterior powers of the standard representation of $U_q(\mathfrak{gl}(1|1))$. This produces a complete diagrammatic description of the category in terms of trivalent graphs, with the usual MOY relations plus one additional family of relations. The technique also gives a useful connection between a system of symmetries on $\bigoplus_m \dot{U}_q(\mathfrak{gl}(m))$ and the braiding on the category of $U_q(\mathfrak{gl}(1|1))$-representations which can be used to construct the Alexander polynomial and coloured variants.
\end{abstract}
\maketitle

\section{Introduction}
The Alexander polynomial of a knot is a classical invariant, first studied in the 1920's. It assigns to any knot a Laurent polynomial and provides a tool for telling knots apart. Further exploration shows that in fact the Alexander polynomial contains useful topological information, for instance a famous result of Freedman tells us that if a knot $K$ has the same Alexander polynomial as the unknot, then it must bound a topologically flat embedded disc in the 4-ball.

The Alexander polynomial can be defined in many different ways. One way is to define it by taking the cyclic covering space of the knot complement, giving the rational homology the structure of a $\Q[t,t^{-1}]$-module by using the action of the group of covering transformations and then defining the Alexander polynomial to be the order of the torsion of this module. An advantage of this definition is that it does not require one to choose a diagram for the knot, and hence evidently gives a knot invariant.

However, in \cite{Reshetikhin1990}, Reshetikhin and Turaev define a large family of knot polynomials using deformations of the universal enveloping algebra of a simple Lie algebra, called quantum groups. One colours each strand of a framed link by a representation of the quantum group, and uses certain canonical maps at each crossing and Morse critical point to define an endomorphism of the trivial representation $\C(q)$, which is given by a Laurent polynomial. The definition requires choosing a diagram in Morse position (so that critical points are isolated), but turns out to be independent of this choice, and so gives an invariant of oriented framed links.

It is therefore interesting to study the Alexander polynomial in the context of Reshetikhin and Turaev's polynomials, which are known as quantum knot polynomials. Perhaps one difficulty of such a description is that the Alexander polynomial of a split link is always $0$, whereas the Reshetikhin and Turaev procedure always assigns to a disjoint union of unknots the product of the quantum dimensions of the representations they are coloured with. This therefore hints that the correct representation to look at will have to have quantum dimension $0$. This is only possible by investigating either a quantum universal enveloping algebra with the value $q$ taken to be a root of unity, or by slightly extending the work of Reshetikin and Turaev to superalgebras, where the extra $\Z/2\Z$ grading provides a setting where the quantum dimension can be $0$. A thorough description of this is given in Sartori \cite{Sartori2013}.

The description of the Reshetikhin-Turaev procedure applied to the quantum group $U_q(\mathfrak{sl}_n)$ is greatly simplified by the diagram calculus appearing in the paper of Murakami, Ohtsuki and Yamada \cite{Murakami1998}. This converts a knot or link into a sum of trivalent graphs, and gives relations which simplify any such graph to a linear combination of circles that can be evaluated to polynomials. This provides a technique to calculate the quantum $\mathfrak{sl}_n$ polynomials for links with components coloured by representations $\bigwedge^i_q \C^n_q$, which are exterior powers of the standard $n$-dimensional representation of $U_q(\mathfrak{sl}_n)$.

In a recent paper, Cautis, Kamnitzer and Morrison \cite{Cautis2012} show that in fact the diagram calculus given by Murakami, Ohtsuki and Yamada (MOY) gives a complete description of a certain category of representations of $U_q(\mathfrak{sl}_n)$. That is, if one defines a category $\operatorname{Rep}(\mathfrak{sl}_n)$ to be the category of tensor products of exterior powers of the standard representation of $\mathfrak{sl}_n$, then by interpreting certain trivalent graphs to be maps in $\operatorname{Rep}(\mathfrak{sl}_n)$, one can actually describe every morphism in $\operatorname{Rep}(\mathfrak{sl}_n)$ by MOY diagrams.

The particular interest of this work is the use of skew Howe duality to obtain the result. Cautis, Kamnitzer and Morrison prove a quantum version of skew Howe duality to construct a full and faithful functor between a subcategory of $\operatorname{Rep}(\mathfrak{sl}_n)$ and a quotient of the quantum group $\dot U_q(\mathfrak{gl}_m)$ (thought of as a category) for each $m>2$. This gives the authors a concrete description of $\operatorname{Rep}(\mathfrak{sl}_n)$, which had previously been hard to obtain. An interesting feature of this relationship is that the natural system of symmetries in $\bigoplus_m U_q(\mathfrak{gl}_m)$ actually translates into the braiding on  $\operatorname{Rep}(\mathfrak{sl}_n)$ which is crucial for the definition of the knot polynomials. This relationship was particular exploited in \cite{Lauda2012} and \cite{Queffelec}, which categorifies the features of the above technique and allows Lauda, Queffelec and Rose to provide a way that homology theories defined by Khovanov \cite{Khovanov1999} and Khovanov and Rozansky \cite{Khovanov2004} arise from categorified quantum groups.

In this paper we apply the technique of quantum skew Howe duality to the quantum group $U_q(\mathfrak{gl}(1|1))$ to deduce a similar result about a category of representations of $U_q(\mathfrak{gl}(1|1))$. Defining the category $\operatorname{Rep}$ to be the category with objects given by tensor products of exterior powers of the standard representation, and morphisms given by morphisms of $U_q(\mathfrak{gl}(1|1))$-modules, we establish a diagram calculus for $\operatorname{Rep}$ similar to the one above. The diagrams take the form of ladders, introduced in \cite{Cautis2012}, which are very similar to MOY diagrams but slightly more `rigid' in the sense that two ladders differing only by an isotopy are not always identified. Forming a certain category $\operatorname{Lad}^\Xi_m$ of ladders with $m$ `uprights', we establish the following (Corollary \ref{co:threecategories}):

\begin{thm*}
There is an equivalence of categories
\[\bigoplus_m \operatorname{Lad}^\Xi_m \to \operatorname{Rep}.\]
\end{thm*}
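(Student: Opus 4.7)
The plan is to construct the claimed functor by factoring it through a suitable subquotient of the quantum group $\dot{U}_q(\mathfrak{gl}_m)$, producing a chain of equivalences
\[\bigoplus_m \operatorname{Lad}^\Xi_m \;\xrightarrow{\;\sim\;}\; \bigoplus_m \dot{U}_q(\mathfrak{gl}_m)^{\Xi} \;\xrightarrow{\;\sim\;}\; \operatorname{Rep},\]
where the middle category is an appropriate quotient of $\dot{U}_q(\mathfrak{gl}_m)$ by the ideal annihilating the weight spaces realised by tensor products of exterior powers of the standard $U_q(\mathfrak{gl}(1|1))$-representation. The named corollary then follows by composing the two equivalences.

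For the first equivalence, I would follow the template set up in \cite{Cautis2012}: send the generators $E_i^{(k)}, F_i^{(k)}\cdot 1_{\lambda}$ to the corresponding splitting/merging trivalent vertices in a ladder, read off the uprights' labels from $\lambda$, and verify that all of the defining relations of $\dot{U}_q(\mathfrak{gl}_m)$ hold among these diagrams modulo the standard MOY relations. The novelty compared with the $\mathfrak{sl}_n$ setting is the single additional family of relations mentioned in the abstract: these must be shown to coincide with the extra relations forced on the quantum-group side by the quotient. This is mechanical once the ladder relations are tabulated.

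For the second equivalence, the central ingredient is a quantum super skew Howe duality statement: the commuting actions of $U_q(\mathfrak{gl}(1|1))$ and $U_q(\mathfrak{gl}_m)$ on $\bigwedge_q^{\bullet}(\C_q^{1|1}\otimes \C_q^m)$ generate each other's commutants. Granting this, the action functor factors through a faithful embedding of the quotient $\dot{U}_q(\mathfrak{gl}_m)^{\Xi}$ into $\operatorname{Rep}$, essential surjectivity holds because every tensor product $\bigwedge^{a_1}_q\C_q^{1|1}\otimes\cdots\otimes\bigwedge^{a_m}_q\C_q^{1|1}$ appears as a weight space for suitable $m$, and fullness of the action on morphisms is exactly the Howe duality surjection onto $U_q(\mathfrak{gl}(1|1))$-equivariant maps.

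The main obstacle is faithfulness of the composite, that is, verifying that the additional MOY-type relation is \emph{complete}: every relation satisfied by the corresponding representation-theoretic maps must already follow from the ladder relations. This amounts to computing the kernel of the $\dot{U}_q(\mathfrak{gl}_m)$-action on the given weight space and matching it against the ideal generated by the stated diagrammatic relations. The superalgebra setting complicates both sides, through sign conventions on the ladder side and through the vanishing of quantum dimensions on the representation side, so I expect the cleanest route is to produce an explicit spanning set of ladders in each Hom-space, to bound its dimension from above by that of the corresponding Hom-space in $\operatorname{Rep}$ (computable via an explicit basis of highest-weight vectors in $U_q(\mathfrak{gl}(1|1))$-modules), and to conclude equality from the already-established fullness.
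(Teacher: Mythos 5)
Your overall architecture is the same as the paper's: identify $\bigoplus_m\operatorname{Lad}^\Xi_m$ with a quotient of the idempotented quantum group $\bigoplus_m\dot{U}_q(\mathfrak{gl}(m))$ following Cautis--Kamnitzer--Morrison, use quantum skew Howe duality on $\bigwedge_q(\C^{1|1}_q\otimes\C^m_q)$ to get a full functor to $\operatorname{Rep}$ hitting every generating object, and reduce the theorem to showing that the single extra relation $\Xi$ generates the whole kernel. Two caveats, one minor and one substantive. The minor one: you ``grant'' the double commutant statement, but in the super setting this is itself a result that must be proved; the paper does it by a flatness argument (the $q=1$ specialisation of $\bigwedge_q(\C^{1|1}_q\otimes\C^m_q)$ is the classical exterior algebra, so the multiplicity-free hook decomposition of Cheng--Wang lifts to generic $q$), and you correctly flag it as the central ingredient but give no argument.

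The substantive gap is in faithfulness, which is exactly the new content of the theorem. Your plan --- construct an explicit spanning set of ladders in each Hom-space modulo the MOY and $\Xi$ relations, bound its size by $\dim\operatorname{Hom}_{U_q(\mathfrak{gl}(1|1))}$, and conclude from fullness --- is logically sound as a strategy, but the entire difficulty is concentrated in producing such a spanning set, and you give no indication of how to do this; in the presence of the extra relation this is a hard combinatorial problem that the paper never touches. The paper's actual mechanism is structural: by Doty's generalised $q$-Schur algebras (Lemma 4.4.2 of Cautis--Kamnitzer--Morrison), $\dot{U}_q(\mathfrak{gl}(m))/I_{(K,0,\ldots,0)}$ is a finite direct sum of matrix algebras $\operatorname{End}_{\C(q)}(V_m(l))$, and skew Howe duality identifies the kernel of the functor to $\operatorname{Rep}$ as the sum of the summands indexed by non-hook weights $\nu$ (those with $\nu_2\geq 2$). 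Since each such summand is a simple algebra, it then suffices to exhibit a single element of the ideal $\Xi$ acting nontrivially on each $V_m(\nu)$; the derived two-upright relation obtained from the $t=s=1$ case of the non-MOY identity does this, because its two non-identity terms factor through weights strictly higher than $\nu$ and hence annihilate $V_m(\nu)$, leaving the nonzero scalar $[k+1][k][l][l-1]$ with $k=\nu_1$, $l=\nu_2$. Without this (or some equally concrete replacement, e.g.\ an actual basis count), your proposal does not establish that $\Xi$ is complete, so the key step of the theorem remains unproved.
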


There are a few qualitative differences between this result and the one established in \cite{Cautis2012}. One difference is that due to the superalgebra structure, the $i$th exterior power of the standard representation is never the $0$ representation for $i>0$, which means that trivalent graphs of all colours are necessary, whereas in the case of $U_q(\mathfrak{sl}(n))$ a strand with high enough colours would cause the diagram to evaluate identically to $0$. Related to this is the fact that the dual representation to the standard one never occurs as an exterior power of the standard representation, unlike in the case of $U_q(\mathfrak{sl}_n)$ where $\bigwedge^{n-1}_q \C^n_q \cong (\C^n_q)^*$. The downside to this is that the maps needed to decompose a closed MOY diagram are not contained in the category $\operatorname{Rep}$. A final difference is that not all relations in $\operatorname{Rep}$ are derived from MOY relations: there is an extra non-MOY relation $\Xi$ that plays no part in the evaluation of closed diagrams, but does exist as a relation in $\operatorname{Rep}$.

A similar description of $\operatorname{Rep}$ was obtained independently in the work of Sartori \cite[Theorem 3.3.12]{Sartori2014a}, instead exploiting Schur-Weyl duality. It is nevertheless useful to have a skew Howe duality description of $\operatorname{Rep}$, since skew Howe duality has been fruitfully exploited in many recent papers and seems to be closely related to foam-based descriptions of Khovanov-Rozansky homology.

The intention of this work is to set a possible grounding for a categorification in the style of \cite{Lauda2012} and \cite{Queffelec}, or of \cite{Cautis2010} and \cite{Cautis2012a} which would provide a homology theory categorifying the Alexander polynomial that arises from representation theory, rather than from the categorifications arising from Floer homology. There is ongoing research into the connection between knot Floer homology and quantum $\mathfrak{sl}_n$ homology (see, for instance, \cite{Manolescu2014}), and a quantum $\mathfrak{gl}(1|1)$ homology theory may give a helpful intermediary.

It would also be interesting to relate this to categorifications of $\dot U_q(\mathfrak{gl}(1|1))$ directly. The positive half of $\dot U_q(\mathfrak{gl}(m|1))$ was categorified by Khovanov and Sussan \cite{Khovanov2014}, and certain integral forms of $\dot U_q(\mathfrak{gl}(1|1))$ were categorified by Tian \cite{Tian2012}.

\begin{acknowledgement}
I am grateful to the organisers and speakers of the 2013 SMS at the Universit\'{e} de Montr\'{e}al, where I first learned of some of the ideas used in this paper, particularly in talks by Aaron Lauda. I am also grateful to the referees of this paper for helping to clarify the exposition. This work is supported by an EPSRC doctoral training grant.
\end{acknowledgement}

\section{MOY calculus for the Alexander polynomial}
\subsection{MOY moves}\label{se:moymoves}
It is possible to specialise MOY calculus (see \cite{Murakami1998}) to the case $N=0$, to give the following moves:

\begin{figure}[h!]\begin{equation}\label{zero}\tag{Move 0}
\left(\begin{tikzpicture}[baseline=-0.65ex]
\draw (0,0) circle (0.5);
\draw (0.7,0) node {$i$};
\end{tikzpicture}\right) = 0
\end{equation}
\begin{equation}\label{one}\tag{Move 1}
\left(\begin{tikzpicture}[baseline=-0.65ex,scale=0.6]
\draw (0,-1.5) -- (0,1.5);
\draw (0,-0.5) to [out=270, in=270] (1,-0.2) to [out=90, in=270] (1,0.2) to [out=90, in=90] (0,0.5);
\draw (-0.25, -1) node {$i$};
\draw (-0.25, 1) node {$i$};
\draw (-0.7, 0) node {$j+i$};
\draw (1.25,0) node {$j$};
\end{tikzpicture}\right) = (-1)^j\chuse{i+j-1}{j} 
\left(\begin{tikzpicture}[baseline=-0.65ex,scale=0.6]
\draw (0,-1.5) -- (0,1.5);
\draw (-0.25,0) node {$i$};
\draw (0.25,0) node {};
\end{tikzpicture}\right)
\end{equation}
\begin{equation}\label{two}\tag{Move 2}
\left(\begin{tikzpicture}[baseline=-0.65ex,scale=0.6]
\draw (0.5,-1.5) -- (0.5,-0.6);
\draw (0.5, -0.6) to [out=90, in=270] (0,0) to [out=90,in=270] (0.5,0.6);
\draw (0.5, -0.6) to [out=90, in=270] (1,0) to [out=90,in=270] (0.5,0.6);
\draw (0.5,0.6) -- (0.5,1.5);
\draw (0.75, -1) node {$i$};
\draw (0.75, 1.1) node {$i$};
\draw (-0.7, 0) node {$i-j$};
\draw (1.25, 0) node {$j$};
\end{tikzpicture}\right) = \chuse{i}{j}
\left(\begin{tikzpicture}[baseline=-0.65ex,scale=0.6]
\draw (0.5,-1.5) -- (0.5, 1.5);
\draw (0.75, 0) node {$i$};
\draw (0.25, 0) node {};
\end{tikzpicture}\right)
\end{equation}
\begin{equation}\label{three}\tag{Move 3}
\left(\begin{tikzpicture}[baseline=-0.65ex,scale=0.6]
\draw (0,-1.5) -- (0,-0.5);
\draw (0,-0.5) to [out=90,in=270] (-1,0.5) to [out=90,in=270] (-2,1.5);
\draw (-1,0.5) to [out=90, in=270] (0,1.5);
\draw (0,-0.5) to [out=90,in=270] (1,0.5) to [out=90,in=270] (1,1.5);
\draw (1.3,-1) node {$i+j+k$};
\draw (1.25, 1.25) node {$k$};
\draw (-1.5,0) node {$i+j$};
\draw (-2.25, 1.25) node {$i$};
\draw (-0.5,1.25) node {$j$};
\end{tikzpicture}\right)=
\left(\begin{tikzpicture}[baseline=-0.65ex,scale=0.6]
\draw (0,-1.5) -- (0,-0.5);
\draw (0,-0.5) to [out=90,in=270] (-1,0.5) to [out=90,in=270] (-1,1.5);
\draw (1,0.5) to [out=90, in=270] (2,1.5);
\draw (0,-0.5) to [out=90,in=270] (1,0.5) to [out=90,in=270] (0,1.5);
\draw (1.3,-1.25) node {$i+j+k$};
\draw (2.25, 1.25) node {$k$};
\draw (1.5,0) node {$j+k$};
\draw (-1.25, 1.25) node {$i$};
\draw (0.5,1.25) node {$j$};
\end{tikzpicture}\right)
\end{equation}
\begin{equation}\label{four}\tag{Move 4}
\left(\begin{tikzpicture}[baseline=-0.65ex]
\draw (0,-1) to [out=90,in=180] (0.5,-0.5) to [out=0,in=180] (1,-0.5) to [out=0, in=90] (1.5,-1);
\draw (0.5,-0.5) to [out=180, in=270] (0,0) to [out=90,in=180] (0.5,0.5);
\draw (1,-0.5) to [out=0, in=270] (1.5,0) to [out=90,in=0] (1,0.5);
\draw (0,1) to [out=270,in=180] (0.5,0.5) to [out=0,in=180] (1,0.5) to [out=0, in=270] (1.5,1);
\draw (-0.25, -0.75) node {$1$};
\draw (1.75, -0.75) node {$i$};
\draw (-0.25, 0) node {$i$};
\draw (-0.25, 0.75) node {$1$};
\draw (1.75, 0.75) node {$i$};
\draw (1.75, 0) node {$1$};
\draw (0.75,-0.75) node {$i+1$};
\draw (0.75,0.75) node {$i+1$};
\end{tikzpicture}\right)
= -[i+1]
\left(\begin{tikzpicture}[baseline=-0.65ex]
\draw (0,-1) to [out=90, in=180] (0.5,-0.5) to [out=0,in=180] (1,-0.5) to [out=0,in=90] (1.5,-1);
\draw (0,1) to [out=270, in=180] (0.5,0.5) to [out=0,in=180] (1,0.5) to [out=0,in=270] (1.5,1);
\draw (0.75,-0.5) to [out=180, in=270] (0.25,0) to [out=90,in=180] (0.75,0.5);
\draw (-0.25, -0.75) node {$1$};
\draw (1.75, -0.75) node {$i$};
\draw (1, 0) node {$i-1$};
\draw (-0.25, 0.75) node {$1$};
\draw (1.75, 0.75) node {$i$};
\end{tikzpicture}\right)
+
\left(\begin{tikzpicture}[baseline=-0.65ex]
\draw (0,-1) -- (0,1);
\draw (1,-1) -- (1,1);
\draw (-0.25,0) node {$1$};
\draw (1.25, 0) node {$i$};
\end{tikzpicture}\right)
\end{equation}
\begin{equation}\label{five}\tag{Move 5}
\left(\begin{tikzpicture}[baseline=-0.65ex]
\draw (0,-1)-- (0,1);
\draw (2,-1)-- (2,1);
\draw (0,0.66) to [out=270,in=180] (0.5,0.5) to [out=0,in=180] (1.5,0.5) to [out=0, in=90] (2,0.33);
\draw (0,-0.66) to [out=90, in=180] (0.5,-0.5) to [out=0,in=180] (1.5,-0.5) to [out=0, in=270] (2,-0.33);
\draw (0.1,-1.25) node {$k$};
\draw (2,-1.25) node {$l$};
\draw (0.45,0) node {$k-s$};
\draw (0.1,1.25) node {$k-s+r$};
\draw (1, -0.75) node {$s$};
\draw (1.6,0) node {$l+s$};
\draw (1, 0.66) node {$r$};
\draw (2,1.25) node {$l+s-r$};
\end{tikzpicture}\right)
= \sum_t \chuse{k-l+r-s}{t}\left(\begin{tikzpicture}[baseline=-0.65ex]
\draw (0,-1)-- (0,1);
\draw (2,-1)-- (2,1);
\draw (0,-0.33) to [out=270,in=180] (0.5,-0.5) to [out=0,in=180] (1.5,-0.5) to [out=0, in=90] (2,-0.66);
\draw (0,0.33) to [out=90, in=180] (0.5,0.5) to [out=0,in=180] (1.5,0.5) to [out=0, in=270] (2,0.66);
\draw (0.1,-1.25) node {$k$};
\draw (2,-1.25) node {$l$};
\draw (0.7,0) node {$k+r-t$};
\draw (0.1,1.25) node {$k+r-s$};
\draw (1, -0.75) node {$r-t$};
\draw (2.7,0) node {$l-r+t$};
\draw (1, 0.66) node {$s-t$};
\draw (2,1.25) node {$l-r+s$};
\end{tikzpicture}\right)
\end{equation}
\caption{The six MOY moves}\label{MOYmoves}\end{figure}

At a crossing we either have exactly two strands oriented inwards, or exactly two strands oriented outwards. Choosing an orientation for one of the strands on the bottom of the diagram uniquely determines the orientation for the rest of the diagram, where the curved lines at a vertex indicate when two strands share the same orientation.
 
Defining crossings by figure \ref{fi:knot} and evaluating diagrams using the above MOY moves gives a link invariant (just as described originally in \cite{Murakami1998}, with the parameter $n=0$). In fact, it is enough to specialise to colours in $\{1,2\}$, by the same argument as \cite[Proposition 5.2]{Grant2013}.   However, of course, since all closed diagrams reduce to circles after applying a sequence of MOY moves, this link invariant will be identically $0$.

\begin{figure}[h]\[
\left(\begin{tikzpicture}[baseline=-0.65ex]
\draw[->] (-1,-1) -- (1,1);
\draw (1,-1) -- (0.25, -0.25);
\draw[<-] (-1,1) -- (-0.25,0.25);
\end{tikzpicture}\right) = q\left(\begin{tikzpicture}[baseline=-0.65ex]
\draw[->] (-0.75,-1) -- (-0.75,1);
\draw[->] (0.75,-1) -- (0.75,1);
\draw (-1,0.5) node {$1$};
\draw (1,0.5) node {$1$};
\end{tikzpicture}\right)
- \left( \begin{tikzpicture}[baseline=-0.65ex]
\draw[->] (-0.75,-1) to [out=90,in=270] (0,-0.33) to [out=90,in=270] (0,0.33) to [out=90,in=270] (-0.75,1);
\draw (0.75,-1) to [out=90,in=270] (0,-0.33);
\draw[->] (0,0.33) to [out=90,in=270] (0.75,1);
\draw (-1, -0.8) node {$1$};
\draw (-1,0.8) node {$1$};
\draw (1, -0.8) node {$1$};
\draw (1, 0.8) node {$1$};
\draw (0.25,0) node {$2$};
\end{tikzpicture}\right)
\]

\[
\left(\begin{tikzpicture}[baseline=-0.65ex]
\draw (-1,-1) -- (-0.25,-0.25);
\draw[->] (0.25,0.25) -- (1,1);
\draw[<-] (-1,1) -- (1,-1);
\end{tikzpicture}\right) = q^{-1}\left(\begin{tikzpicture}[baseline=-0.65ex]
\draw[->] (-0.75,-1) -- (-0.75,1);
\draw[->] (0.75,-1) -- (0.75,1);
\draw (-1,0.5) node {$1$};
\draw (1,0.5) node {$1$};
\end{tikzpicture}\right)
- \left( \begin{tikzpicture}[baseline=-0.65ex]
\draw[->] (-0.75,-1) to [out=90,in=270] (0,-0.33) to [out=90,in=270] (0,0.33) to [out=90,in=270] (-0.75,1);
\draw (0.75,-1) to [out=90,in=270] (0,-0.33);
\draw[->] (0,0.33) to [out=90,in=270] (0.75,1);
\draw (-1, -0.8) node {$1$};
\draw (-1,0.8) node {$1$};
\draw (1, -0.8) node {$1$};
\draw (1, 0.8) node {$1$};
\draw (0.25,0) node {$2$};
\end{tikzpicture}\right)
\]\caption{MOY resolutions of knot diagrams}\label{fi:knot}\end{figure}

As we will see in section \ref{se:relationtodiagrams}, this is actually a feature of the Reshetikhin-Turaev procedure applied to $\mathfrak{gl}(1|1)$. The resolution of this problem is to choose a basepoint on the link, and cut the link at that point to yield a $(1,1)$-tangle. The idea of cutting basepoints to obtain link invariants is well-established, and appears for example in \cite{Sartori2013} and explored in a more general context in \cite{Geer2009}. Then we can apply the MOY moves until we have a polynomial times a single strand, and define $\Delta(L)$ to be this polynomial. Then $\Delta(L)$ is equal to the Alexander polynomial of $L$. Observe that the Alexander polynomial of the unknot is $1$, we have $\Delta(L_+)-\Delta(L_-)=(q-q^{-1})\Delta(L_0)$ (where as usual $L_+$, $L_-$ and $L_0$ indicate a link differing only in one crossing that is positive, negative and resolved respectively), and also the Alexander polynomial of any split link is $0$.

\subsection{The category of MOY diagrams}
We can define a category of trivalent graphs $\operatorname{Tri}$ as the category with objects given by finite sequences of natural numbers, and morphisms generated by
\begin{align*}
\begin{tikzpicture}[baseline=-0.65ex]
\draw (-0.5,-0.5) -- (0,0);
\draw (0,0) -- (0.5,-0.5);
\draw[->] (0,0) -- (0,0.5);
\draw (-0.5,-0.75) node {$k$};
\draw (0.5,-0.75) node {$l$};
\draw (0,0.75) node {$k+l$};
\end{tikzpicture} &&  \begin{tikzpicture}[baseline=-0.65ex]
\draw[<-] (-0.5,0.5) -- (0,0);
\draw[->] (0,0) -- (0.5,0.5);
\draw (0,0) -- (0,-0.5);
\draw (-0.5,0.75) node {$k$};
\draw (0.5,0.75) node {$l$};
\draw (0,-0.75) node {$k+l$};
\end{tikzpicture} \\ 
\begin{tikzpicture}[baseline=-0.65ex]
\draw[<-] (-0.5,0.5) [out=270,in=180] to (0,0) [out=0,in=270] to (0.5,0.5);
\end{tikzpicture} && \begin{tikzpicture}[baseline=-0.65ex]
\draw[->] (-0.5,0.5) [out=270,in=180] to (0,0) [out=0,in=270] to (0.5,0.5);
\end{tikzpicture} \\
\begin{tikzpicture}[baseline=-0.65ex]
\draw[<-] (-0.5,-0.5) [out=90,in=180] to (0,0) [out=0,in=90] to (0.5,-0.5);
\end{tikzpicture} && \begin{tikzpicture}[baseline=-0.65ex]
\draw[->] (-0.5,-0.5) [out=90,in=180] to (0,0) [out=0,in=90] to (0.5,-0.5);
\end{tikzpicture}
\end{align*}
with the cups and caps of each colour in $\N$. Morphisms are read from bottom to top, and composition is given by placing diagrams on top of one another, and set to $0$ if the labellings on the identified edges do not match. We can then define the category $\operatorname{MOY}$ to be the quotient of this category by the MOY relations in figure \ref{MOYmoves}. The category $\operatorname{MOY}$ is monoidal, with the monoidal structure given by placing diagrams next to one another horizontally.

\subsection{Categorification problems}
In their paper \cite{Khovanov2004}, Khovanov and Rozansky categorify the polynomial associated to $U_q(\mathfrak{sl}(n))$ by producing chain group summands $C(\Gamma)$ associated to a resolution of a link diagram that obey categorified versions of the MOY moves. A similar approach is taken in \cite{Wu2009} with the coloured versions. Ideally we would like to be able to do the same thing here, but MOY moves for the Alexander polynomial have minus signs that do not appear in the $\mathfrak{sl}(n)$ case, which suggests any categorification takes a different flavour to the categorifications of $\mathfrak{sl}(n)$ polynomials, as the minus signs suggest the use of chain complexes in a way that was not necessary in those cases. However, we observe that the only time the moves \ref{one} and \ref{four} appear in Reidemeister moves is when there are strands pointing downwards: therefore if we restrict to braid diagrams and braid-like Reidemeister moves, it becomes possible to categorify the remaining MOY moves. A categorification of our category $\operatorname{Rep}$ of $U_q(\mathfrak{gl}(1|1))$-modules was obtained in \cite{Sartori2013a}, using methods from the BGG category $\mathcal{O}$. However, the categorification of the non-MOY relation (see Section \ref{se:additional}) is still conjectural in that setting (see \cite[Conjecture 7.7]{Sartori2013a}).

In \cite{Gilmore2010}, Gilmore shows invariance of the knot Floer cube of resolutions by observing that it satisfies some of these MOY relations, but also seems to require non-local relations. 

\section{Representation theory of \texorpdfstring{$U_q(\mathfrak{gl}(1|1))$}{Uq(gl(1|1))}}
In this section we give an overview of the quantum group $U_q(\mathfrak{gl}(1|1)$ and its representation theory, and how this relates to the Alexander polynomial. Our main reference for this section is \cite{Sartori2013}, although we use different notation and different conventions in places in order to more closely tie in with the sequel.
\subsection{The quantum group}
We let $U_q(\mathfrak{gl}(1|1))$ be generated as a $\Z/2\Z$-graded $\C(q)$-algebra by $E,F$ in degree $1$ and $L_1^{\pm 1},L_2^{\pm 1}$ in degree $0$ with relations
\[ L_1L_2=L_2L_1,\quad L_1 E=q EL_1, \quad L_2 E =q^{-1} E L_2 \]
\[ L_1 F = q^{-1} F L_1,\quad L_2 F = q F L_2 \]
\[ EF + FE = \frac{K-K^{-1}}{q-q^{-1}},\quad K=L_1L_2 \]
\[ E^2=F^2=0. \]
It is well-known it is possible to define a Hopf superalgebra structure on $U_q(\mathfrak{gl}(1|1))$. We choose comultiplication to be
\[ \Delta (E)= E\otimes K^{-1} + 1\otimes E,\quad  \Delta (F)=F\otimes 1 + K\otimes F \]
\[ \Delta(L_1)=L_1\otimes L_1,\quad  \Delta(L_2)=L_2\otimes L_2,\quad \Delta(K)=K\otimes K \]
and the antipode to be
\[ S(E)=-EK, S(F)=-K^{-1}F, S(L_1)=L_1^{-1}, S(L_2)=L_2^{-1} \]
with counit
\[ \epsilon (E)=\epsilon(F)=0, \epsilon (L_1)=\epsilon(L_2)=1. \]

Due to the Hopf superalgebra structure, given a representation $V$ we can define the dual representation $V^*$ using the antipode. Also, given two representations $V,W$, the comultiplication gives the action of $U_q(\mathfrak{gl}(1|1))$ on their tensor product $V\otimes W$.

\subsection{Representation Theory}\label{se:representationtheory}
The standard representation $\C_q^{1|1}$ of $U_q(\mathfrak{gl}(1|1))$ is generated as a vector space by $v,w$ with
\begin{align*}
 Ev=0,&& Fv=w,&& L_1 v = qv,&& L_2v=v \\
 Ew=v,&& Fw=0,&& L_1 w = w,&& L_2w = qw.
\end{align*}
Here $v$ is in degree $0$ and $w$ is in degree $1$. Since the quantum dimension of $\C^{1|1}_q$ is given by the supertrace of the action of $K=L_1L_2$, we see that $\dim_q \C^{1|1}_q=0$.

For this representation, the action of the $R$-matrix is given by
\[ R:\C^{1|1}_q\otimes \C^{1|1}_q \to \C^{1|1}_q\otimes \C^{1|1}_q:\left\lbrace \begin{aligned}
w\otimes w &\mapsto -q^{-1}w\otimes w \\
v\otimes w &\mapsto (q-q^{-1})v\otimes w + w\otimes v\\
w\otimes v &\mapsto v\otimes w \\
v\otimes v &\mapsto qv\otimes v
\end{aligned}\right. \]
\[ R^{-1}:\C^{1|1}_q\otimes \C^{1|1}_q \to \C^{1|1}_q\otimes \C^{1|1}_q: \left\lbrace \begin{aligned}
w\otimes w &\mapsto -q w\otimes w \\
v\otimes w &\mapsto w\otimes v\\
w\otimes v &\mapsto (q^{-1}-q)w\otimes v + v\otimes w\\
v\otimes v &\mapsto q^{-1}v\otimes v
\end{aligned}\right. \]

Following \cite{Berenstein2008}, we define the braided exterior algebra\footnote{The term `braided' here refers to the fact that it depends on the braided monoidal structure of the module category.} of a representation $V$ as
\[ \bigwedge_q V= TV/(S^2_q(V)) \]
where $TV=\bigoplus_n V^{\otimes n}$, the symmetric square $S^2_q(V)$ of $TV$ is the vector subspace generated by all elements $x\in V\otimes V$ such that $R(x)=q^r x$ for some $r\in \Z$ (ie. all eigenvectors of $R$ with positive eigenvalues), and $(S^2_q(V))$ is the two-sided ideal generated by $S^2_q(V)$. This definition of the symmetric square ensures that the action of $U_q(\mathfrak{gl}(1|1))$ on $V^{\otimes n}$ descends to an action of $U_q(\mathfrak{gl}(1|1))$ on $\bigwedge^n_q(V)$. Then
\[ \bigwedge_q \C_q^{1|1}= T\C^{1|1}_q/(v\otimes v , qv\otimes w+w\otimes v). \]
Note that $\bigwedge^i_q \C^{1|1}_q$ is non-trivial for all $i\geq 0$, since $w\wedge \cdots \wedge w$ is non-zero. As a vector space, $\bigwedge \C^{1|1}_q \cong \bigwedge \langle v\rangle \otimes S \langle w \rangle$, where $S$ is the symmetric algebra. For all $i$, $\bigwedge^i \C^{1|1}$ is $2$-dimensional (as a vector space), and is spanned by $v\wedge w \wedge \cdots \wedge w$ and $w\wedge w \wedge \cdots \wedge w$, which we denote $v\wedge w^{i-1}$ and $w^i$ respectively.

Note that the dual representation $(\C^{1|1}_q)^*$ does not occur as $\bigwedge^i_q(\C^{1|1}_q)$ for any $i$, in contrast to the case of the dual of the standard representation $\C^m_q$ over $U_q(\mathfrak{gl}(m))$. Note also that each of the representations $\bigwedge^i_q(\C^{1|1}_q)$ is simple.

\subsection{Important Maps}
The product in the braided exterior algebra $\bigwedge_q(\C^{1|1}_q)$ is given by the wedge product:
\[ M_{k,l}:\bigwedge^k_q \C^{1|1}_q \otimes \bigwedge_q ^l \C^{1|1}_q \to \bigwedge_q^{k+l} \C^{1|1}_q : \left\lbrace\begin{aligned} w^k\otimes w^l &\mapsto  w^{k+l}\\ v\wedge w^{k-1} \otimes w^l &\mapsto  v\wedge w^{k+l-1} \\
w^k \otimes v\wedge w^{l-1} &\mapsto (-q)^{k}  v\wedge w^{k+l-1}\\
v\wedge w^{k-1}\otimes v\wedge w^{l-1}&\mapsto 0
\end{aligned}\right.\]
It also has a coproduct, given as follows:
\[ M'_{k,l}:\left\lbrace\begin{aligned}\bigwedge^{k+l}_q \C^{1|1}_q &\to \bigwedge^k_q \C^{1|1}_q\otimes \bigwedge^l_q \C^{1|1}_q
\\w^{k+l}&\mapsto \chuse{k+l}{k} w^k\otimes w^l\\
 v\wedge w^{k+l-1} &\mapsto 
 q^{-l}\chuse{k+l-1}{l}v\wedge w^{k-1}\otimes w^l + (-1)^k \chuse{k+l-1}{k}w^k\otimes v\wedge w^{l-1}
\end{aligned}\right.\]
We also have maps
\[ \operatorname{coev}: \C(q) \to \bigwedge^i_q \C^{1|1}_q\otimes (\bigwedge^i_q\C^{1|1}_q)^*: 1 \mapsto v\wedge w^{i-1}\otimes (v\wedge w^{i-1})^* + w^i\otimes (w^i)^* \]
\[ \widehat{\operatorname{coev}}:\C(q) \to (\bigwedge^i_q\C^{1|1}_q)^*\otimes \bigwedge^i_q\C^{1|1}_q: 1\mapsto (-1)^{i-1}(q^{-i} (v\wedge w^{i-1})^*\otimes v\wedge w^{i-1} -  q^{-i} (w^i)^*\otimes w^i)   \]
\[ \operatorname{ev}:(\bigwedge^i_q\C^{1|1}_q)^*\otimes \bigwedge^i_q\C^{1|1}_q \to \C(q): \begin{aligned} (v\wedge w^{i-1})^*\otimes v\wedge w^{i-1} &\mapsto 1 \\ (w^i)^*\otimes w^i &\mapsto 1
\end{aligned}\]
\[\widehat{\operatorname{ev}}: \bigwedge^i_q\C^{1|1}_q\otimes (\bigwedge^i_q\C^{1|1}_q)^* \to \C(q): \begin{aligned}
v\wedge w^{i-1}\otimes (v\wedge w^{i-1})^* &\mapsto (-1)^{i-1}q^i \\ w^i\otimes (w^i)^* &\mapsto (-q)^i.
\end{aligned}\]

These arise from the general theory of ribbon Hopf algebras. The ribbon element $v$ acts as the identity on $\C^{1|1}_q$ and the element $u$ (with $S^2(x)=uxu^{-1}$ for any $x$) acts as $K$ (see, for example, \cite{Sartori2013} or \cite{Viro2007} for more details).

\subsection{Relation to diagram calculus}\label{se:relationtodiagrams}
There is a functor from the category of trivalent graphs $\operatorname{Tri}$ to the category of representations of $U_q(\mathfrak{gl}(1|1))$, defined on generating morphisms as follows:
\[ \begin{tikzpicture}[baseline=-0.65ex]
\draw (-0.5,-0.5) -- (0,0);
\draw (0,0) -- (0.5,-0.5);
\draw[->] (0,0) -- (0,0.5);
\draw (-0.5,-0.75) node {$k$};
\draw (0.5,-0.75) node {$l$};
\draw (0,0.75) node {$k+l$};
\end{tikzpicture} \mapsto M_{k,l}
\quad \quad
\begin{tikzpicture}[baseline=-0.65ex]
\draw[<-] (-0.5,0.5) -- (0,0);
\draw[->] (0,0) -- (0.5,0.5);
\draw (0,0) -- (0,-0.5);
\draw (-0.5,0.75) node {$k$};
\draw (0.5,0.75) node {$l$};
\draw (0,-0.75) node {$k+l$};
\end{tikzpicture} \mapsto M'_{k,l}
\]

\[\begin{tikzpicture}[baseline=-0.65ex]
\draw[<-] (-0.5,0.5) [out=270,in=180] to (0,0) [out=0,in=270] to (0.5,0.5);
\end{tikzpicture} \mapsto \operatorname{coev}
\quad\quad
\begin{tikzpicture}[baseline=-0.65ex]
\draw[->] (-0.5,0.5) [out=270,in=180] to (0,0) [out=0,in=270] to (0.5,0.5);
\end{tikzpicture} \mapsto \widehat{\operatorname{coev}}
\]

\[\begin{tikzpicture}[baseline=-0.65ex]
\draw[<-] (-0.5,-0.5) [out=90,in=180] to (0,0) [out=0,in=90] to (0.5,-0.5);
\end{tikzpicture} \mapsto \operatorname{ev}
\quad\quad
\begin{tikzpicture}[baseline=-0.65ex]
\draw[->] (-0.5,-0.5) [out=90,in=180] to (0,0) [out=0,in=90] to (0.5,-0.5);
\end{tikzpicture} \mapsto \widehat{\operatorname{ev}}
\]

\begin{prop}
This functor obeys the MOY relations, hence descends to a monoidal functor from the category $\operatorname{MOY}$ of MOY diagrams to the category of representations.
\end{prop}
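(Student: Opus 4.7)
The plan is to verify each MOY move directly by evaluating both sides on an explicit basis. The key structural advantage is that $\bigwedge_q^i\C^{1|1}_q$ is always two-dimensional, spanned by $v\wedge w^{i-1}$ and $w^i$, so every tensor product appearing in a relation has a tiny basis and the maps $M_{k,l},M'_{k,l},\operatorname{ev},\widehat{\operatorname{ev}},\operatorname{coev},\widehat{\operatorname{coev}}$ can be composed term by term. Moreover, since each $\bigwedge^i_q\C^{1|1}_q$ is simple, it suffices in several cases to check that the linear map produced by the left-hand side of a relation is proportional to the identity (or to $M_{k,l}$), and then compute the single proportionality constant on one convenient basis vector.

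Move 0 is the vanishing of the quantum dimension of $\bigwedge^i_q\C^{1|1}_q$: the closed circle maps to $\widehat{\operatorname{ev}}\circ\operatorname{coev}$, and the explicit values $\widehat{\operatorname{ev}}(v\wedge w^{i-1}\otimes(v\wedge w^{i-1})^*)=(-1)^{i-1}q^i$ and $\widehat{\operatorname{ev}}(w^i\otimes (w^i)^*)=(-q)^i$ cancel against the two terms of $\operatorname{coev}(1)$. Move 2 is the identity $M'_{k,l}\circ M_{k,l}=\chuse{i}{j}\operatorname{Id}$, a direct computation on the two basis vectors in which the sign $(-q)^k$ built into $M_{k,l}$ is precisely what collapses the cross-terms into the $q$-binomial. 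Move 1 is obtained by combining $M'_{k,l}$ with an evaluation/coevaluation pair, and its scalar $(-1)^j\chuse{i+j-1}{j}$ emerges from the coefficients of $v\wedge w^{i-1}\otimes w^j$ and $w^i\otimes v\wedge w^{j-1}$ in $M'_{k,l}(v\wedge w^{i+j-1})$ after contracting with $\widehat{\operatorname{ev}}$. Move 3 holds because the braided exterior algebra $\bigwedge_q\C^{1|1}_q$ is associative: all merges are its graded multiplications, and both sides of the relation compute the same triple product.

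The substance of the proof lies in Moves 4 and 5, both of which interleave merges with splits across a pair of parallel strands. The strategy remains the same: apply the left-hand side to each generator of the bottom tensor product, expand using the coproduct formula for $M'_{k,l}$ (which introduces the quantum binomials and signs), remultiply by $M_{k,l}$, and compare with the right-hand side in the same basis. For Move 4 the computation is short because one of the uprights is coloured $1$, so the intermediate strand of colour $i+1$ contributes only a handful of cross-terms, and the surrounding signs collapse cleanly into $-[i+1]$ times the evaluation/coevaluation expression plus an identity contribution. The main obstacle is Move 5, which requires a genuine sum over $t$: one must expand the composition via $M'$ twice and $M$ twice, and show that the resulting coefficient of each basis vector of the top tensor product equals the claimed $q$-binomial $\chuse{k-l+r-s}{t}$. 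The matching of coefficients reduces to a $q$-Vandermonde-style identity, and once this identity is isolated the verification becomes a careful but mechanical case analysis according to which of the two basis types ($v\wedge w^{\cdot}$ or pure $w^{\cdot}$) sits on each of the four edges involved.
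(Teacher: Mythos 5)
Your proposal is correct and follows essentially the same route as the paper: a direct, case-by-case verification of the six moves on the explicit two-dimensional bases $\{v\wedge w^{i-1}, w^i\}$, with Move 0 coming from the cancellation $(-1)^{i-1}q^i+(-q)^i=0$, Move 3 from (co)associativity, and Moves 4 and 5 by expanding the composites of $M$ and $M'$ and comparing coefficients. Two small points of comparison. First, for Move 2 the composite depicted is split-then-merge, i.e.\ $M_{i-j,j}\circ M'_{i-j,j}$ acting on the simple two-dimensional module $\bigwedge^i_q\C^{1|1}_q$ (your Schur-type shortcut is legitimate there since $\operatorname{End}=\C(q)$); the opposite order $M'\circ M$, as literally written in your sketch, is an endomorphism of the four-dimensional space $\bigwedge^k_q\C^{1|1}_q\otimes\bigwedge^l_q\C^{1|1}_q$ and is \emph{not} a multiple of the identity (it kills $v\wedge w^{k-1}\otimes v\wedge w^{l-1}$), so the order should be corrected. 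Second, where you propose to prove Moves 1 and 5 in full generality by brute force (leading, as you say, to a $q$-Vandermonde-style binomial identity for the sum over $t$), the paper instead reduces: Move 1 is checked only for $j=1$ with the general case obtained by splitting the $j$-strand via Move 2 and sliding vertices via Move 3, and Move 5 is reduced to $r=s=1$ by the same trick, after which the only identity needed is $[a][b+1]=[b][a+1]+[a-b]$. Your direct computation would work, but the paper's reduction buys a much lighter verification at the cost of invoking the already-established moves; conversely your version is self-contained for each move but requires isolating and proving the $q$-binomial summation identity explicitly.
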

We prove this by checking each relation as follows:
\subsubsection*{Move 0}
Move 0 is obtained by either $\operatorname{ev}\circ\, \widehat{\operatorname{coev}}$ or $\widehat{\operatorname{ev}}\circ \operatorname{coev}$, which can be seen in either case to give $0$.

\subsubsection*{Move 1}
We check the case $i=k,j=1$ by following the map from bottom to top:
\begin{align*}
v\wedge w^{k-1} &\mapsto v\wedge w^{k-1}\otimes (v\otimes v^*+w\otimes w^*)\\ &\mapsto v\wedge w^{k} \otimes w^* \\ &\mapsto  q^{-1}[k]v\wedge w^{k-1}\otimes w \otimes w^* + (-1)^k w^{k}\otimes v\otimes w^* \\ & \mapsto -[k]v\wedge w^{k-1}. \\ w^k &\mapsto w\otimes (v\otimes v^*+ w\otimes w^*) \\ & \mapsto (-q)^k v\wedge w^k \otimes v^*+w^{k+1} \otimes w^*\\  &\mapsto (-q)^kq^{-1}[k]v\wedge w^{k-1}\otimes w \otimes v^* + q^k w^k \otimes v\otimes v^* +[k+1]w^k\otimes w \otimes w^* \\ &\mapsto q^{k+1} w^k-q[k+1]w^k=-[k]w^k \end{align*}
so the map acts as $-[k]\id$ on basis elements as required. The case for general $j$ can be checked similarly. Alternatively, one could define similar maps to $M_{k,l}$ and $M'_{k,l}$ on the dual $(\C_q^{1|1})^*$ such that one can slide trivalent vertices around cups and caps. The result would then follow by induction on $j$ by using Move 2 to split the strand labelled $j$ into two strands labelled $j-1$ and $1$, and using Move 3 to rearrange the diagram into two nested applications of Move 1. The mirror image of this move can be checked similarly, applying $\widehat{\operatorname{coev}}$ and $\operatorname{ev}$.

\subsubsection*{Move 2}
We verify $M_{k,l}\circ M'_{k,l}=\chuse{k+l}{l}$. This follows from the identity
\[ q^{-l}\chuse{k+l-1}{l}+q^{k}\chuse{k+l-1}{l-1} = \chuse{k+l}{l}. \]

\subsubsection*{Move 3}
This follows from coassociativity of the comultiplication, and with arrows reversed follows from associativity of multiplication.

\subsubsection*{Move 4}
Since the diagrams are not in Morse position, to interpret this map we must first apply an isotopy so that the strands labelled $i+1$ are now at a diagonal slant and oriented upwards. Then the map for the left-hand diagram acts on $w\otimes (w^{i-1})^*$ as:
\begin{align*}
w\otimes (w^i)^* &\mapsto (-1)^{i-1}(q^{-i}(v\wedge w^{i-1})^*\otimes v\wedge w^{i-1} - q^{-i} (w^i)^*\otimes w^i)\otimes w\otimes (w^i)^* \\
& \mapsto (-1)^{i-1} (q^{-i}(v\wedge w^{i-1})^*\otimes v\wedge w^i\otimes (w^i)^* - q^{-i} (w^i)^*\otimes w^{i+1}\otimes (w^i)^*) \\
& \mapsto (-1)^{i-1}(q^{-i}(v\wedge w^{i-1})^*\otimes (q^{-i}v\otimes w^i - [i]w\otimes v\wedge w^{i-1})\otimes (w^i)^*\\
& \quad\quad - q^{-i}[i+1] (w^i)^*\otimes w \otimes w^{i}\otimes (w^i)^*) \\
&\mapsto -q^{-i} (v\wedge w^{i-1})^*\otimes v +[i+1](w^i)^*\otimes w\\
& \mapsto (-q^{-i} (v\wedge w^{i-1})^*\otimes v +[i+1](w^i)^*\otimes w)\otimes (v\wedge w^{i-1}\otimes (v\wedge w^{i-1})^*+ w^i\otimes (w^i)^*) \\
& \mapsto -q^{-i}(v\wedge w^{i-1})^*\otimes v\wedge w^i \otimes (w^i)^* -q[i+1](w^i)^*\otimes v\wedge w^i \otimes (v\wedge w^{i-1})^* \\ & \quad\quad + [i+1](w^i)^*\otimes w^{i+1}\otimes (w^{i})^* \\
&\mapsto  -q^{-i}(v\wedge w^{i-1})^*\otimes (q^{-1}[i]v\wedge w^{i-1}\otimes w +(-1)^i w^i\otimes v)\otimes (w^i)^* \\
& \quad\quad -q[i+1](w^i)^*\otimes (q^{-1}[i]v\wedge w^{i-1}\otimes w +(-1)^i w^i\otimes v)\otimes (v\wedge w^{i-1})^* \\
& \quad\quad +[i+1]^2 (w^{i})^*\otimes w^i\otimes w \otimes (w^i)^* \\
&\mapsto -q^{-i-1}[i]w\otimes (w^i)^* - (-1)^iq[i+1]v\otimes (v\wedge w^{i-1})^* + [i+1]^2w\otimes (w^{i})^*
\end{align*}
while the left-hand diagram on the right-hand side acts as
\begin{align*}
w\otimes (w^i)^* &\mapsto (-1)^{i-2}(q^{-i+1}(v\wedge w^{i-2})^*\otimes v\wedge w^{i-2} - q^{-i+1} (w^{i-1})^*\otimes w^{i-1})\otimes w\otimes (w^{i})^*\\
&\mapsto (-1)^{i-2}(q^{-i+1}(v\wedge w^{i-2})^*\otimes v\wedge w^{i-1}\otimes (w^i)^* - q^{-i+1}(w^{i-1})^*\otimes w^i\otimes (w^i)^*) \\
&\mapsto -q (w^{i-1})^*\\
&\mapsto -q (w^{i-1})^* \otimes (v\wedge w^{i-1}\otimes (v\wedge w^{i-1})^* + w^i\otimes (w^i)^*) \\
&\mapsto -q (w^{i-1})^*\otimes (q^{-1}[i-1]v\wedge w^{i-2}\otimes w + (-1)^{i-1}w^{i-1}\otimes v) \otimes (v\wedge w^{i-1})^*\\ &\quad\quad - q[i] (w^{i-1})^*\otimes w^{i-1}\otimes (w^i)^*\\
&\mapsto (-1)^i q v\otimes (v\wedge w^{i-1})^* - q[i]w\otimes (w^{i})^*.
\end{align*}
So by comparing coefficients and using $q[i][i+1]+1=[i+1]^2-q^{-i-1}[i]$, we get the result for this basis element, and the others are similar.

\subsubsection*{Move 5}
It suffices to verify for $r=s=1$, since if $r,s$ are larger, then we can split the edges of the left-hand diagram labelled $r,s$ using move 2 into two pairs of edges with one of the pair labelled $1$, then slide the trivalent vertices using move 3 to obtain the left-hand side of move 5 with only $s=r=1$. The resulting diagrams after applying move 5 have colour at most $s-1$, $r-1$, so the result follows by induction. The diagram on the left-hand side acts on basis elements as:
\begin{align*} w^k\otimes w^l &\mapsto [k][l+1] w^{k}\otimes w^{l} \\
v\wedge w^{k-1}\otimes w^l &\mapsto (q^{-1}[k-1][l+1]+q^{k-l-1})v\wedge w^{k-1}\otimes w^l + (-1)^k[l]w^k\otimes v\wedge w^{l-1} \\
w^k\otimes v\wedge w^{l-1} &\mapsto (-1)^k q^{k-l}[k]v\wedge w^{k-1}\otimes w^l + q[k][l]w^k\otimes v\wedge w^{l-1} \\
v\wedge w^{k-1}\otimes v\wedge w^{l-1} &\mapsto [k-1][l]v\wedge w^{k-1}\otimes v\wedge w^{l-1}.
\end{align*}
Similarly, the $t=0$ diagram on the right-hand side acts as
\begin{align*} w^k\otimes w^l &\mapsto [k+1][l] w^{k}\otimes w^{l} \\
v\wedge w^{k-1}\otimes w^l &\mapsto q^{-1}[k][l]v\wedge w^{k-1}\otimes w^l + (-1)^k[l]w^k\otimes v\wedge w^{l-1} \\
w^k\otimes v\wedge w^{l-1} &\mapsto (-1)^k q^{k-l}[k]v\wedge w^{k-1}\otimes w^l + (q^{k-l+1}+q[k+1][l-1])w^k\otimes v\wedge w^{l-1} \\
v\wedge w^{k-1}\otimes v\wedge w^{l-1} &\mapsto [k][l-1]v\wedge w^{k-1}\otimes v\wedge w^{l-1}
\end{align*}
and clearly the $t=1$ diagram is a multiple of the identity morphism, so the result follows from the identity
\[ [a][b+1] = [b][a+1]+[a-b] \]
for all $a,b\in\Z$.

\subsection{The link invariant}
To construct a link invariant from this, one can take a braid diagram whose closure is the link and interpret this as a map $(\C^{1|1}_q)^{\otimes n} \to (\C^{1|1}_q)^{\otimes n}$. Taking the quantum supertrace of $n-1$ of these tensor factors yields a map $\C^{1|1}_q \to \C_q^{1|1}$ which is equal to $\Delta(L)\cdot\id_{\C^{1|1}_q}$ where $\Delta(L)\in \C(q)$ (see \cite{Geer2009} for more discussion on the role of such traces). In fact, one finds that $\Delta(L)$ is a Laurent polynomial and is independent of the choice of tensor factors involved in the trace, and the braid presentation of $L$ (this is essentially \cite[Theorem~4.6]{Sartori2013}).

\subsection{Non-MOY diagram identity}\label{se:additional}
The maps in the previous subsection satisfy an additional identity that does not arise from MOY calculus:
\begin{lem} The functor defined before satisfies the relation:
\begin{equation}\label{eqn:nonmoy} \chuse{k}{t}\chuse{l}{s}\begin{tikzpicture}[scale=0.3,baseline=0.7cm]
\draw (0,0) -- (0,1);
\draw (0,1) to [out=90,in=270] (2,3);
\draw (2,3) to [out=270,in=90] (4,1);
\draw (4,1) -- (4,0);
\draw (2,3) -- (2,4);
\draw (2,4) to [out=90,in=270] (1,5);
\draw (2,4) to [out=90,in=270] (3,5);
\draw (0,1) -- (0,6);
\draw (4,1) -- (4,6);
\draw (1,5) -- (1,6);
\draw (3,5) -- (3,6);
\coordinate [label=left:$k$] (k) at (0,0.5);
\coordinate [label=right:$l$] (l) at (4,0.5);
\coordinate [label=below:$t$] (t) at (1,2);
\coordinate [label=below:$s$] (s) at (3,2);
\coordinate [label=below:$t$] (tt) at (1,5);
\coordinate [label=below:$s$] (ss) at (3,5);
\end{tikzpicture}
-\chuse{l}{s}
\begin{tikzpicture}[scale=0.3,baseline=0.7cm]
\draw (0,0) -- (0,1);
\draw (0,1) to [out=90,in=270] (2,3);
\draw (2,3) to [out=270,in=90] (4,1);
\draw (4,1) -- (4,0);
\draw (2,3) -- (2,4);
\draw (2,4) to [out=90,in=270] (0,6);
\draw (2,4) to [out=90,in=270] (3,5);
\draw (0,1) -- (0,8);
\draw (4,1) -- (4,8);
\draw (3,5) -- (3,8);
\draw (0,7) to [out=90,in=270] (1,8);
\coordinate [label=left:$k$] (k) at (0,0.5);
\coordinate [label=right:$l$] (l) at (4,0.5);
\coordinate [label=below:$t$] (t) at (1,2);
\coordinate [label=below:$s$] (s) at (3,2);
\coordinate [label=below:$t$] (tt) at (1,5);
\coordinate [label=below:$s$] (ss) at (3,5);
\coordinate [label=below:$t$] (ttt) at (1,8);
\end{tikzpicture}
- \chuse{k}{t}
\begin{tikzpicture}[scale=0.3,baseline=0.7cm]
\draw (0,0) -- (0,1);
\draw (0,1) to [out=90,in=270] (2,3);
\draw (2,3) to [out=270,in=90] (4,1);
\draw (4,1) -- (4,0);
\draw (2,3) -- (2,4);
\draw (2,4) to [out=90,in=270] (1,5);
\draw (2,4) to [out=90,in=270] (4,6);
\draw (0,1) -- (0,8);
\draw (4,1) -- (4,8);
\draw (1,5) -- (1,8);
\draw (4,7) to [out=90,in=270] (3,8);
\coordinate [label=left:$k$] (k) at (0,0.5);
\coordinate [label=right:$l$] (l) at (4,0.5);
\coordinate [label=below:$t$] (t) at (1,2);
\coordinate [label=below:$s$] (s) at (3,2);
\coordinate [label=below:$t$] (tt) at (1,5);
\coordinate [label=below:$s$] (ss) at (3,5);
\coordinate [label=below:$s$] (sss) at (3,8);
\end{tikzpicture}
+
\begin{tikzpicture}[scale=0.3,baseline=0.7cm]
\draw (0,0) -- (0,1);
\draw (0,1) to [out=90,in=270] (2,3);
\draw (2,3) to [out=270,in=90] (4,1);
\draw (4,1) -- (4,0);
\draw (2,3) -- (2,4);
\draw (2,4) to [out=90,in=270] (0,6);
\draw (2,4) to [out=90,in=270] (4,6);
\draw (0,1) -- (0,8);
\draw (4,1) -- (4,8);
\draw (4,7) to [out=90,in=270] (3,8);
\draw (0,7) to [out=90,in=270] (1,8);
\coordinate [label=left:$k$] (k) at (0,0.5);
\coordinate [label=right:$l$] (l) at (4,0.5);
\coordinate [label=below:$t$] (t) at (1,2);
\coordinate [label=below:$s$] (s) at (3,2);
\coordinate [label=below:$t$] (tt) at (1,5);
\coordinate [label=below:$s$] (ss) at (3,5);
\coordinate [label=below:$t$] (ttt) at (1,8);
\coordinate [label=below:$s$] (sss) at (3,8);
\end{tikzpicture}=0\end{equation}
for $k,l\geq 2$ and $t,s\geq 1$. 
\end{lem}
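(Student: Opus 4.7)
The plan is to recognize the four-term combination as a factored composition of ``zigzag'' operators, reducing the vanishing to a subspace containment that can be verified on basis elements. Under the functor to $\operatorname{Rep}$, each diagram is a map $D_i:\bigwedge^k_q\C^{1|1}_q\otimes\bigwedge^l_q\C^{1|1}_q\to\bigwedge^{k-t}_q\C^{1|1}_q\otimes\bigwedge^t_q\C^{1|1}_q\otimes\bigwedge^s_q\C^{1|1}_q\otimes\bigwedge^{l-s}_q\C^{1|1}_q$, where $D_1=(\id\otimes M'_{t,s}\circ M_{t,s}\otimes\id)\circ(M'_{k-t,t}\otimes M'_{s,l-s})$ consists of the two initial splittings followed by the central merge--split bubble, and $D_2,D_3,D_4$ arise from $D_1$ by post-composing with the zigzag operators $L := M'_{k-t,t}\circ M_{k-t,t}$ on the first two tensor factors (for $D_2,D_4$) and/or $R := M'_{s,l-s}\circ M_{s,l-s}$ on the last two (for $D_3,D_4$). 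Since $L$ and $R$ act on disjoint tensor factors they commute, and the four-term combination factors as $(L-\chuse{k}{t}\id)\circ(R-\chuse{l}{s}\id)\circ D_1$.

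By MOY Move~2, $M_{k-t,t}\circ M'_{k-t,t}=\chuse{k}{t}\id_{\bigwedge^k_q}$, so $L\circ M'_{k-t,t}=\chuse{k}{t}M'_{k-t,t}$: the operator $L$ acts as the scalar $\chuse{k}{t}$ on $\operatorname{im}(M'_{k-t,t})$ and vanishes on $\ker(M_{k-t,t})$. Provided $\chuse{k}{t}\neq 0$, this yields the direct sum decomposition
\[ \bigwedge^{k-t}_q\C^{1|1}_q\otimes\bigwedge^t_q\C^{1|1}_q \;=\; \operatorname{im}(M'_{k-t,t})\oplus\ker(M_{k-t,t}), \]
and identifies $L-\chuse{k}{t}\id$ with $-\chuse{k}{t}(\id-P_L)$, where $P_L$ is the projection onto $\operatorname{im}(M'_{k-t,t})$ along $\ker(M_{k-t,t})$; the analogous statement holds for $R$. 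The identity thus reduces to showing that the image of $D_1$ has vanishing component in $\ker(M_{k-t,t})\otimes\ker(M_{s,l-s})$, viewed as a subspace of $\bigwedge^{k-t}_q\otimes\bigwedge^t_q\otimes\bigwedge^s_q\otimes\bigwedge^{l-s}_q$ via the two direct sums above.

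To verify this containment, I would compute $D_1(a\otimes b)$ on each of the four basis elements $w^k\otimes w^l$, $v\wedge w^{k-1}\otimes w^l$, $w^k\otimes v\wedge w^{l-1}$, $v\wedge w^{k-1}\otimes v\wedge w^{l-1}$ of $\bigwedge^k_q\C^{1|1}_q\otimes\bigwedge^l_q\C^{1|1}_q$ using the explicit formulas for $M_{\bullet,\bullet}$ and $M'_{\bullet,\bullet}$ from the previous subsection, and project the resulting terms onto $\ker\otimes\ker$ using the explicit bases of the kernels (computed from $M(v\otimes v)=0$ and the relation $q\,v\otimes w+w\otimes v\in\ker M_{1,1}$, generalized appropriately). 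The main obstacle is the combinatorial bookkeeping: the projection coefficients carry $q$-binomials, the coproduct introduces the $(-1)^k$ sign, and the $\Z/2$-grading produces further signs when reordering tensor factors. The cancellations forcing the $\ker\otimes\ker$ projection to vanish are governed by standard $q$-integer identities, of the same flavour as $[a][b+1]-[b][a+1]=[a-b]$ used in the verifications of Moves~4 and~5.
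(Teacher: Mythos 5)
Your factorization is correct and it is a genuinely different route from the paper's proof. The paper argues by brute force: it expands each of the four diagrams on the generating vectors $w^k\otimes w^l$ and $v\wedge w^{k-1}\otimes w^l$ using the explicit formulas for $M_{\bullet,\bullet}$ and $M'_{\bullet,\bullet}$ and checks that all coefficients cancel. You instead observe that the four terms are exactly the expansion of $(L-\chuse{k}{t}\id)\circ(R-\chuse{l}{s}\id)\circ D_1$ with $L=M'_{k-t,t}\circ M_{k-t,t}$ on factors $(1,2)$ and $R=M'_{s,l-s}\circ M_{s,l-s}$ on factors $(3,4)$ (the signs and coefficients do match, and $L,R$ are even maps, so no super-signs interfere with commuting them), and that Move 2 makes $L$ and $R$ scalar multiples of idempotents, so the lemma becomes the single statement that $\operatorname{im}(D_1)$ has no component in $\ker(M_{k-t,t})\otimes\ker(M_{s,l-s})$; the nonvanishing of $\chuse{k}{t},\chuse{l}{s}$ that you flag is automatic over $\C(q)$ in the relevant range $t\le k$, $s\le l$. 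This buys a structural explanation of the relation that the paper's term-by-term cancellation does not give, at the cost of needing the splitting $\operatorname{im}(M')\oplus\ker(M)$; the paper's computation needs no such input but gives no insight into why the four terms conspire. The only incomplete step in your plan is the final containment, which you propose to settle by another basis computation with the attendant $q$-binomial bookkeeping; in fact no computation is needed. All maps in sight, including the projection onto $\ker(M_{k-t,t})\otimes\ker(M_{s,l-s})$ along the complementary summands (these are all submodules), are $U_q(\mathfrak{gl}(1|1))$-module maps, so the component you must kill is an element of $\operatorname{Hom}_{U_q(\mathfrak{gl}(1|1))}\bigl(\bigwedge^k_q\C^{1|1}_q\otimes\bigwedge^l_q\C^{1|1}_q,\ \ker(M_{k-t,t})\otimes\ker(M_{s,l-s})\bigr)$. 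The source is generated by $w^k\otimes w^l$ and $v\wedge w^{k-1}\otimes w^l$, on which the grouplike element $L_1$ acts by $1$ and $q$; on the other hand $\ker M_{a,b}$ is $L_1$-stable and does not contain the $L_1$-eigenvalue-$1$ line (since $M_{a,b}(w^a\otimes w^b)=w^{a+b}\neq 0$), so $L_1$ acts on $\ker M_{a,b}$ with eigenvalues in $\{q,q^2\}$ and on the tensor product of the two kernels with eigenvalues in $\{q^2,q^3,q^4\}$. A module map preserves $L_1$-eigenvalues, hence kills both generators, hence vanishes. With that observation your argument closes without any of the bookkeeping you were worried about, and is arguably cleaner than the paper's verification.
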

This relation is partly redundant, as we will only require $t=s=1$ to prove Theorem \ref{th:faithful}, meaning that the more general case must follow from the $t=s=1$ case.
\begin{proof}
Each term in the sum sends
\[ w^k\otimes w^l \mapsto \chuse{k}{t}^2\chuse{l}{s}^2\chuse{s+t}{t} w^{k-t}\otimes w^t\otimes w^s \otimes w^{l-s} \]
so all terms cancel for this basis element. In order, each diagram acts on $v\wedge w^{k-1}\otimes w^l$ as follows:
\begin{align*}
v\wedge w^{k-1}\otimes w^l \mapsto & q^{-t}\chuse{k-1}{t}\chuse{l}{s}\chuse{t+s}{s}v\wedge w^{k-t-1}\otimes w^t\otimes w^s\otimes w^{l-s}\\ &+ (-1)^{k-t}q^{-s}\chuse{k-1}{t}\chuse{l}{s}\chuse{t+s-1}{s} w^{k-t}\otimes v\wedge w^{t-1}\otimes w^s\otimes w^{l-s}\\ &+ (-1)^k \chuse{k-1}{k-t}\chuse{l}{s}\chuse{t+s-1}{t} w^{k-t}\otimes w^t\otimes v\wedge w^{s-1}\otimes w^{l-s}\end{align*}

\begin{align*}
v\wedge w^{k-1}\otimes w^l \mapsto & q^{-2t}\chuse{k-1}{t}^2\chuse{l}{s}\chuse{t+s}{s}v\wedge w^{k-t-1}\otimes w^t\otimes w^s\otimes w^{l-s}\\ 
&+ (-1)^{k-t}q^{-t}\chuse{k-1}{t}\chuse{k-1}{k-t}\chuse{l}{s}\chuse{t+s-1}{s} w^{k-t}\otimes v\wedge w^{t-1}\otimes w^s\otimes w^{l-s}\\ 
&+ q^{k-2t-s}\chuse{k-1}{t}\chuse{k-1}{k-t}\chuse{l}{s}\chuse{t+s-1}{s}v\wedge w^{k-t-1}\otimes w^t\otimes w^s\otimes w^{l-s} \\
&+ (-1)^{k-t}q^{k-t-s}\chuse{k-1}{k-t}^2\chuse{l}{s}\chuse{t+s-1}{s}w^{k-t}\otimes v\wedge w^{t-1}\otimes w^s\otimes w^{l-s} \\
&+ (-1)^k \chuse{k-1}{k-t}\chuse{k}{t}\chuse{l}{s}\chuse{t+s-1}{t} w^{k-t}\otimes w^t\otimes v\wedge w^{s-1}\otimes w^{l-s}
\end{align*}

\begin{align*}
v\wedge w^{k-1}\otimes w^l \mapsto & q^{-t}\chuse{k-1}{t}\chuse{l}{s}^2\chuse{t+s}{s}v\wedge w^{k-t-1}\otimes w^t\otimes w^s\otimes w^{l-s}\\ &+ (-1)^{k-t}q^{-s}\chuse{k-1}{t}\chuse{l}{s}^2\chuse{t+s-1}{s} w^{k-t}\otimes v\wedge w^{t-1}\otimes w^s\otimes w^{l-s}\\ &+ (-1)^k q^{-l+s} \chuse{k-1}{k-t}\chuse{l}{s}\chuse{l-1}{l-s}\chuse{t+s-1}{t} w^{k-t}\otimes w^t\otimes v\wedge w^{s-1}\otimes w^{l-s}\\
&+ (-1)^{k+s}\chuse{k-1}{k-t}\chuse{l}{s}\chuse{l-1}{s}\chuse{t+s-1}{t} w^{k-t}\otimes w^t \otimes w^s\otimes v\wedge w^{l-s-1}
\end{align*}

\begin{align*}
v\wedge w^{k-1}\otimes w^l \mapsto & q^{-2t}\chuse{k-1}{t}^2\chuse{l}{s}^2\chuse{t+s}{s}v\wedge w^{k-t-1}\otimes w^t\otimes w^s\otimes w^{l-s}\\ 
&+ (-1)^{k-t}q^{-t}\chuse{k-1}{t}\chuse{k-1}{k-t}\chuse{l}{s}^2\chuse{t+s-1}{s} w^{k-t}\otimes v\wedge w^{t-1}\otimes w^s\otimes w^{l-s}\\ 
&+ q^{k-2t-s}\chuse{k-1}{t}\chuse{k-1}{k-t}\chuse{l}{s}^2\chuse{t+s-1}{s}v\wedge w^{k-t-1}\otimes w^t\otimes w^s\otimes w^{l-s} \\
&+ (-1)^{k-t}q^{k-t-s}\chuse{k-1}{k-t}^2\chuse{l}{s}^2\chuse{t+s-1}{s}w^{k-t}\otimes v\wedge w^{t-1}\otimes w^s\otimes w^{l-s} \\
&+ (-1)^kq^{-l+s} \chuse{k-1}{k-t}\chuse{k}{t}\chuse{l}{s}\chuse{l-1}{l-s}\chuse{t+s-1}{t} w^{k-t}\otimes w^t\otimes v\wedge w^{s-1}\otimes w^{l-s}\\
&+ (-1)^{k+s} \chuse{k-1}{k-t}\chuse{k}{t}\chuse{l}{s}\chuse{l-1}{s}\chuse{t+s-1}{t}w^{k-t}\otimes w^t \otimes w^s\otimes v\wedge w^{l-s-1}
\end{align*}
Careful comparison of coefficients, when coefficients of diagrams are included, shows that these all cancel in the sum, hence the sum acts as $v\wedge w^{k-1}\otimes w^l \mapsto 0$. The vectors $w^k\otimes w^l$ and $v\wedge w^{k-1}\otimes w^l$ generate the representation $\bigwedge_q^k\C^{1|1}_q\otimes \bigwedge_q^l\C^{1|1}_q$, so the result follows.
\end{proof}

\begin{remark}
This relation is not necessary for evaluating closed MOY diagrams, or for evaluating diagrams arising from links with a component cut open at a basepoint, since the MOY relations were already sufficient to do this. However, the above shows that the relation is consistent with the MOY relations.
\end{remark}
\begin{remark} In another form, this relation also appears in \cite[Definition~5.5]{Sartori2013a} for the case $k=l=2$, and arose in that context from idempotents projecting onto simple representations of the Hecke algebra. This seems to be a quantisation of a relation well-known to experts, although it is hard to find a source.
\end{remark}

\section{Quantum Skew Howe Duality}
We prove an important result that relates to commuting actions of the quantum groups $U_q(\mathfrak{gl}(m))$ and $U_q(\mathfrak{gl}(1|1))$ on certain modules, which allows us to prove Corollary \ref{co:full} that gives a full functor from idempotented versions of $U_q(\mathfrak{gl}(m))$ to a category of representations of $U_q(\mathfrak{gl}(1|1))$. This will be crucial in what follows.

Much of this section follows the proof of the corresponding result obtained in \cite{Cautis2012}. More information about braided exterior algebras can be found in \cite{Berenstein2008}. The algebras $\dot{U}_q(\mathfrak{gl}(m))/I_{\lambda}$ used in the proof of Theorem \ref{th:faithful} are generalised q-Schur algebras introduced by Doty \cite{Doty2003}.

\subsection{The Quantum Group \texorpdfstring{$U_q(\mathfrak{gl}(m))$}{Uq(gl(m))}}\label{se:glm}
The quantum group $U_q(\mathfrak{gl}(m))$ is the unital $\C(q)$-algebra generated by $E_i,F_i$ for $1\leq i \leq m-1$, and $K_i^{\pm 1}$ for $1\leq i\leq m$ subject to the relations
\[K_iK_i^{-1}=1,\quad K_iK_j=K_jK_i \]
\[ K_i E_i = q E_i K_i,\quad  K_{i+1} E_{i} = q^{-1} E_{i} K_{i+1},\quad K_i E_j = E_j K_i \quad \mathrm{if}\quad j\neq i,i-1 \]
\[ K_i F_i = q^{-1} F_i K_i,\quad K_{i+1} F_i = q F_i K_{i+1} ,\quad K_i F_j = F_j K_i \quad \mathrm{if}\quad j\neq i,i-1\]
\[ E_iF_j-F_jE_i=\delta_{ij} \frac{K_iK_{i+1}^{-1}-K_i^{-1}K_{i+1}}{q-q^{-1}} \]
\[ E_i^2E_j-(q+q^{-1})E_iE_jE_i+E_jE_i^2=0 \quad \mathrm{if}\quad j=i\pm 1 \]
\[ F_i^2F_j-(q+q^{-1})F_iF_jF_i+F_jF_i^2=0 \quad \mathrm{if}\quad j=i\pm 1 \]
\[ E_iE_j=E_jE_i, \quad F_iF_j=F_jF_i \quad \mathrm{if}\quad |i-j|>1. \]

As before, we can choose a coproduct
\[ \Delta(E_i)=E_i\otimes K_iK_{i+1}^{-1} + 1\otimes E_i, \quad \Delta(F_i)=F_i\otimes 1 + K_i^{-1}K_{i+1}\otimes F_i, \quad \Delta(K_i)=K_i\otimes K_i \]
which along with the antipode
\[ S(K_i)=K_i^{-1}, \quad S(E_i)=-E_iK_i^{-1}K_{i+1}, \quad S(F_i)=-K_iK_{i+1}^{-1}F_i \]
and counit
\[ \epsilon(K_i)=1, \quad \epsilon(E_i)=0, \quad \epsilon(F_i)=0 \]
makes $U_q(\mathfrak{gl}(m))$ into a Hopf algebra.

The weight lattice of $U_q(\mathfrak{gl}(m))$ is $\Z^m$, and an element is called a weight. Recall that a weight $\lambda$ is said to be dominant if the entries of $\lambda$ form a partition of some natural number. There is a partial order on the weight lattice where  $\lambda\geq \mu$ if and only if $\lambda-\mu=\sum_i c_i\alpha_i$ where $\alpha_i=(0,\ldots,0,1,-1,0,\ldots,0)$ with $1$ in the $i$th position, and $c_i\geq 0$ for all $i$. The Weyl group acts on the weight lattice by permuting the entries.

\subsection{Quantum Skew Howe Duality}
In this subsection we will mostly be concerned with the module
\[ \bigwedge_q(\C_q^{1|1}\otimes \C_q^m) \]
over $U_q(\mathfrak{gl}(1|1)\oplus \mathfrak{gl}(m))=U_q(\mathfrak{gl}(1|1))\otimes U_q(\mathfrak{gl}(m))$ (note that any Lie algebra is trivially a Lie superalgebra with the degree $1$ subspace equal to $0$). This is defined using the $R$-matrix as in  \ref{se:representationtheory} following \cite{Berenstein2008}, where $R$ acts only as permutation of factors on $\C^{m}_q\otimes \C^{1|1}_q$ and $\C^{1|1}_q\otimes \C^m_q$, and as the $R$-matrix of $U_q(\mathfrak{gl}(m))$ on $\C^m_q\otimes \C^m_q$ and as the $R$-matrix of $U_q(\mathfrak{gl}(1|1))$ on $\C^{1|1}_q\otimes \C^{1|1}_q$. So it follows that
\[ R_{\C^{1|1}_q\otimes \C^m_q}=\tau_{23}\circ (R_{\C^{1|1}_q}\otimes R_{\C^m_q})\circ \tau_{23} \]
where $\tau_{23}$ is the map that permutes the middle of the four tensor factors, and hence the braided symmetric square has the form
\begin{equation}\label{symmetricsquare} S_q^2(\C^{1|1}_q\otimes \C^m_q)=\tau_{23}\left((S^2_q(\C^{1|1}_q)\otimes S^2_q(\C^m_q) )\oplus(\bigwedge^2_q(\C^{1|1}_q)\otimes \bigwedge^2_q(\C^m_q))\right).\end{equation}

\begin{lem}\label{le:flat}
The $q=1$ specialisation of $\bigwedge_q(\C_q^{1|1}\otimes \C_q^m)$ is isomorphic to $\bigwedge(\C^{1|1}\otimes \C^m)$ as a module over $U(\mathfrak{gl}(m)\oplus \mathfrak{gl}(1|1))$.
\end{lem}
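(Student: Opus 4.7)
The strategy is to show that the $q=1$ specialisation of the defining relations of $\bigwedge_q(\C^{1|1}_q \otimes \C^m_q)$ coincides with the defining relations of the classical super exterior algebra $\bigwedge(\C^{1|1} \otimes \C^m)$, and that the quotient is a flat deformation so that the graded dimensions match. Granting both, the identity on the tensor algebra will descend to a $U(\mathfrak{gl}(m) \oplus \mathfrak{gl}(1|1))$-equivariant isomorphism.

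First I would check that each constituent $R$-matrix reduces at $q=1$ to the graded flip. For $\C^m_q \otimes \C^m_q$ this is standard, and for $\C^{1|1}_q \otimes \C^{1|1}_q$ one reads off from the explicit formulas of Section \ref{se:representationtheory} that $R|_{q=1}$ sends $v\otimes v \mapsto v\otimes v$, $w\otimes w \mapsto -w\otimes w$, $v\otimes w \mapsto w\otimes v$ and $w\otimes v \mapsto v\otimes w$, which is exactly the graded flip. Combining these via the composition formula $R_{\C^{1|1}_q \otimes \C^m_q} = \tau_{23}(R_{\C^{1|1}_q} \otimes R_{\C^m_q})\tau_{23}$ together with the decomposition \eqref{symmetricsquare}, I would conclude that $S^2_q(\C^{1|1}_q \otimes \C^m_q)|_{q=1}$ is precisely the $+1$-eigenspace of the graded flip on $(\C^{1|1} \otimes \C^m)^{\otimes 2}$, i.e. the classical super symmetric square.

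The main obstacle is flatness: one must show that $\dim_{\C(q)} \bigwedge_q^n(\C^{1|1}_q \otimes \C^m_q)$ equals $\dim_\C \bigwedge^n(\C^{1|1} \otimes \C^m)$ for every $n$, so that no additional $q$-dependent relations collapse dimensions. I would work over the integral form $\C[q,q^{-1}]$ and construct a PBW-type ordered basis: fix an ordering on a weight basis of $\C^{1|1} \otimes \C^m$ (inherited from orderings on the two factors), and use the relations generated by the $R$-matrix eigendecomposition to rewrite any monomial as a $\C[q,q^{-1}]$-linear combination of ordered monomials with appropriate super multiplicities. A diamond-lemma style check ensures the rewriting is confluent. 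Alternatively, one can appeal to the general flatness results of Berenstein-Zwicknagl for braided symmetric/exterior algebras of modules of this form, which applies here because the relevant classical decomposition of $S^2(\C^{1|1}\otimes \C^m)$ lifts to the quantum level with the same Hilbert series.

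Once flatness is established, the identity of the underlying super vector space $\C^{1|1} \otimes \C^m$ extends to a morphism of tensor algebras sending $S^2_q|_{q=1}$ onto $S^2(\C^{1|1}\otimes \C^m)$, so it descends to a linear isomorphism $\bigwedge_q(\C^{1|1}_q \otimes \C^m_q)|_{q=1} \to \bigwedge(\C^{1|1} \otimes \C^m)$. This map intertwines the $U(\mathfrak{gl}(m) \oplus \mathfrak{gl}(1|1))$-actions, since both arise as the $q=1$ specialisation of the $U_q(\mathfrak{gl}(m)\oplus \mathfrak{gl}(1|1))$-action on the tensor algebra, completing the proof.
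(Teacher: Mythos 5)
Your proposal is correct and follows essentially the same route as the paper: the paper (also following Berenstein--Zwicknagl) uses the decomposition \eqref{symmetricsquare} to write down the quantum relations explicitly, produces the ordered-monomial (PBW-type) spanning set $v_{i_1}\wedge\cdots\wedge v_{i_l}\wedge w_{i_{l+1}}\wedge\cdots\wedge w_{i_n}$, and deduces linear independence and matching graded dimensions from the $q=1$ specialisation. Your diamond-lemma/flatness discussion is just a slightly more abstract packaging of the same dimension-count argument.
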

\begin{proof}
We follow \cite{Berenstein2008}. Let $\{x_i\mid 1\leq i\leq m\}$ be a weight basis for $\C^m_q$ and define $v_i=v\otimes x_i$ and $w_i=w\otimes x_i$. Then $\C^{1|1}_q\otimes \C^m_q$ has a basis $\{v_i,w_i\mid 1\leq i\leq m\}$. Using equation (\ref{symmetricsquare}), we see that $S_q^2(\C^{1|1}_q\otimes \C^m_q)$ is freely spanned by elements of the form
\[ v_i\otimes v_i \]
\[ v_i\otimes v_j+qv_j\otimes v_i \]
\[ qv_i\otimes w_i+w_i\otimes v_i \]
\[ qw_i\otimes w_j-w_j\otimes w_i \]
\[ qv_i\otimes w_j + w_i\otimes v_j +q^2v_j\otimes w_i+qw_j\otimes v_i \]
\[ qv_i\otimes w_j - q^2w_i\otimes v_j - v_j\otimes w_i + q w_j\otimes v_i \]
where $1\leq i < j \leq m$ (or $1\leq i \leq m$ for expressions not involving $j$). Setting $a=v_i,b=v_j,c=w_i,d=w_j$ with $i<j$ yields
\begin{equation}\label{eq:relations}
a^2=b^2=0, ab=-qba, ca=-qac, dc=qcd, cb=-bc, ad+da=(q-q^{-1})bc. \end{equation}
Then $\bigwedge_q(\C^{1|1}_q\otimes \C^m_q)$ is generated by $\{v_i,w_i\mid 1\leq i\leq m\}$ subject to the relations \ref{eq:relations}. Thus there is a spanning set of $\bigwedge^n_q(\C^{1|1}_q\otimes \C^m_q)$ given by elements of the form $v_{i_1}\wedge \cdots \wedge v_{i_l}\wedge w_{i_{l+1}}\wedge\cdots\wedge w_{i_n}$ with $1\leq i_1 <\cdots<i_l\leq m$ and $1\leq i_{l+1}\leq\ldots\leq i_n\leq m$, which is linearly independent since the specialisation to $q=1$ is linearly independent. So this basis specialises at $q=1$ to a basis of $\bigwedge(\C^{1|1}\otimes \C^m)$, so this implies the graded dimension of this $q=1$ specialisation is equal to the graded dimension of $\bigwedge(\C^{1|1}\otimes \C^m)$ and the result follows.
\end{proof}
The above presentation of $\bigwedge_q(\C_q^{1|1}\otimes \C_q^m)$ in terms of $v_i$ and $w_i$ makes it easy to see the actions of $U_q(\mathfrak{gl}(m))$ and $U_q(\mathfrak{gl}(1|1))$. The action of $U_q(\mathfrak{gl}(m))$ is
\begin{align*}
E_k (v_i)= \delta_{i-1,k}v_{i-1} && F_k(v_i)=\delta_{ik}v_{i+1}&& K_k(v_i)=q^{\delta_{i,k}}v_i \end{align*}
and the same for $w_i$, and the action of $U_q(\mathfrak{gl}(1|1))$ is
\begin{align*}
E(v_i)=0 && F(v_i)=w_i && L_1(v_i)=qv_i && L_2(v_i)=v_i \\
\\ E(w_i)=v_i && F(w_i)=0 && L_1(w_i)=w_i && L_2(w_i)=qw_i. \end{align*}
\begin{thm}\label{th:skewhoweduality}
The actions of $U_q(\mathfrak{gl}(m))$ and $U_q(\mathfrak{gl}(1|1))$ on $\bigwedge_q(\C_q^{1|1}\otimes \C_q^m)$ generate each others commutant. As $U_q(\mathfrak{gl}(1|1))$ representations, there is an isomorphism
\[ \bigwedge_q(\C_q^{1|1}\otimes \C_q^m)\cong \left(\bigwedge_q \C_q^{1|1}\right)^{\otimes m} \]
and the $(k_1,\ldots,k_m)$-weight space for the action of $U_q(\mathfrak{gl}(m))$ is identified with
\[ \bigwedge^{k_1}_q\C^{1|1}_q \otimes \cdots \otimes \bigwedge^{k_m}_q\C^{1|1}_q. \]
\end{thm}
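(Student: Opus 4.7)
The plan is to first establish the two explicit identifications---the $U_q(\mathfrak{gl}(1|1))$-module isomorphism and the weight-space description---by a direct basis computation, and then derive the double-commutant property from the classical $q=1$ case via the flatness result of Lemma \ref{le:flat}, in direct analogy to the argument of \cite{Cautis2012}.

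I will start by exploiting the PBW-type basis of $\bigwedge_q(\C^{1|1}_q\otimes \C^m_q)$ obtained in the proof of Lemma \ref{le:flat}: every element is uniquely a linear combination of monomials $v_{i_1}\wedge\cdots\wedge v_{i_l}\wedge w_{j_1}\wedge\cdots\wedge w_{j_n}$ with $1\leq i_1<\cdots<i_l\leq m$ and $1\leq j_1\leq\cdots\leq j_n\leq m$. Reading off the $K_p$-eigenvalues on such a monomial, it lies in the $(k_1,\ldots,k_m)$-weight space iff for each $p$ the total number of letters (of either type) indexed by $p$ equals $k_p$. Within a fixed $p$ the strict inequality on $v$-indices limits us to two possibilities: either a single $v_p$ followed by $k_p-1$ copies of $w_p$, or $k_p$ copies of $w_p$ alone. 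These match exactly the two basis vectors $v\wedge w^{k_p-1}$ and $w^{k_p}$ of $\bigwedge^{k_p}_q\C^{1|1}_q$.

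Next I would define the candidate isomorphism
\[\Phi_{k_1,\ldots,k_m}:\bigwedge^{k_1}_q\C^{1|1}_q\otimes\cdots\otimes\bigwedge^{k_m}_q\C^{1|1}_q\longrightarrow\bigwedge_q(\C^{1|1}_q\otimes\C^m_q)\]
sending a simple tensor to the wedge product obtained by replacing each letter $v$ (resp.\ $w$) of the $p$-th factor by $v_p$ (resp.\ $w_p$) and concatenating in order. The bijection of bases shows that $\Phi_{k_1,\ldots,k_m}$ is a $\C(q)$-linear isomorphism onto the $(k_1,\ldots,k_m)$-weight space. To verify that it intertwines the $U_q(\mathfrak{gl}(1|1))$-action I would evaluate $E$, $F$, $L_1$, $L_2$ on both sides, using the iterated comultiplication $\Delta^{m-1}$ on the domain and the direct $v_i\leftrightarrow w_i$ action on the codomain. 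The $q$-powers generated by $\Delta(E)=E\otimes K^{-1}+1\otimes E$ and $\Delta(F)=F\otimes 1+K\otimes F$ should match exactly the reordering factors dictated by the relations (\ref{eq:relations}); this is a mechanical check, analogous to its classical counterpart. Summing over all weights yields the required identification $\bigwedge_q(\C^{1|1}_q\otimes\C^m_q)\cong(\bigwedge_q\C^{1|1}_q)^{\otimes m}$, establishing the second and third assertions.

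Finally, for the double-commutant statement I would argue by flat deformation from the classical case. The actions of $U_q(\mathfrak{gl}(m))$ and $U_q(\mathfrak{gl}(1|1))$ commute on $\C^{1|1}_q\otimes\C^m_q$ since they act on different tensor factors, and the braided symmetric square (\ref{symmetricsquare}) is preserved by both, so the commutation descends to the quotient $\bigwedge_q$. Hence each image algebra is contained in the commutant of the other. Classical skew Howe duality gives equality at $q=1$ on $\bigwedge(\C^{1|1}\otimes\C^m)$, and by Lemma \ref{le:flat} the quantum module is a flat deformation of the classical one. The main obstacle is the last step: one must show that the images $\rho_m(U_q(\mathfrak{gl}(m)))$ and $\rho_{1|1}(U_q(\mathfrak{gl}(1|1)))$ in $\operatorname{End}_{\C(q)}(\bigwedge_q(\C^{1|1}_q\otimes\C^m_q))$ have dimensions at least those of their classical counterparts, after which a dimension count forces each to equal the other's commutant. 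This can be done by constructing, for each linearly independent collection of endomorphisms at $q=1$, a $\C[q,q^{-1}]$-integral lift (for example using divided-power generators of the idempotented forms acting between weight spaces) and verifying linear independence generically in $q$.
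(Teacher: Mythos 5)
Your handling of the second and third assertions (the monomial basis, the reading-off of the $(k_1,\ldots,k_m)$-weight space, and the map $\Phi$ built from $v\mapsto v_p$, $w\mapsto w_p$) is essentially the paper's own argument, which defines $\phi=\phi_1\wedge\cdots\wedge\phi_m$ and identifies the weight spaces the same way; that part is fine, if anything more explicit than the paper. Where you genuinely diverge is the double-commutant claim, and as written your dimension count does not close. Write $A_q=\rho_m(U_q(\mathfrak{gl}(m)))$, $B_q=\rho_{1|1}(U_q(\mathfrak{gl}(1|1)))$ and $A_1,B_1$ for their classical counterparts. You have (i) $A_q\subseteq B_q'$ and $B_q\subseteq A_q'$, and you propose to prove (ii) $\dim_{\C(q)}A_q\geq\dim_{\C}A_1$ and $\dim_{\C(q)}B_q\geq\dim_{\C}B_1$ by lifting linearly independent endomorphisms from $q=1$. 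But (i), (ii) and classical skew Howe duality only give $\dim B_q'\geq\dim A_q\geq\dim A_1=\dim B_1'$; nothing here prevents $B_q'$ from being strictly larger than $A_q$. The missing step is the opposite inequality $\dim_{\C(q)}B_q'\leq\dim_{\C}B_1'$ (and symmetrically on the other side), i.e.\ upper semicontinuity of the commutant under specialisation: take the $\C[q,q^{-1}]$-form of the module furnished by the basis in Lemma \ref{le:flat}, together with integral generators of $B_q$ whose specialisations at $q=1$ generate the classical image (this itself needs care, e.g.\ one must include elements such as $(K-K^{-1})/(q-q^{-1})$ or the divided-power/idempotented form, since $E,F,L_i^{\pm1}$ alone degenerate at $q=1$); then $B_q'$ is the generic kernel of a matrix with entries in $\C[q,q^{-1}]$, whose rank at $q=1$ is at most its generic rank, so $\dim_{\C(q)}B_q'\leq\dim_{\C}B_1'=\dim_{\C}A_1$. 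Only with that bound does your count force $A_q=B_q'$ and $B_q=A_q'$.

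For comparison, the paper sidesteps this deformation bookkeeping: it uses semisimplicity of $\bigwedge_q(\C^{1|1}_q\otimes\C^m_q)$ over $U_q(\mathfrak{gl}(m)\oplus\mathfrak{gl}(1|1))$ and uses Lemma \ref{le:flat} only to match weight multiplicities with $q=1$, concluding that the quantum module has the same decomposition $\bigoplus_{\lambda\in H}V_{(1|1)}(\lambda^t)\otimes V_m(\lambda)$ as in the classical theorem of \cite{Cheng2001}; since this decomposition is multiplicity-free, the two actions generate each other's commutants by the standard double-commutant argument. Your route can certainly be completed, and it has the mild advantage of not needing to know the explicit hook decomposition, but you must either add the semicontinuity step (with the integral-form care on both the module and the algebra side) or switch to the multiplicity-free decomposition argument; as it stands the final ``dimension count'' is asserted rather than proved.
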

\begin{proof}
Let $V_m(\mu)$ denote the irreducible representation of $U(\mathfrak{gl}(m))$ of highest weight $\mu$, taken to be $0$ if $\mu$ has more than $m$ parts. Similarly, let $V_{(1|1)}(\mu)$ be the irreducible representation of $U_q(\mathfrak{gl}(1|1))$ of highest weight $\mu$, taken to be $0$ unless $\mu_2\leq 1$.

By \cite[Theorem~3.3]{Cheng2001}, we have
\begin{equation}\label{eq:weightdecomposition} \bigwedge(\C^{1|1}\otimes \C^m)\cong \bigoplus_{\lambda\in H} V_{(1|1)}(\lambda^t)\otimes V_m(\lambda) \end{equation}
where $H$ is the set of hook-shaped partitions, that is, partitions satisfying $\lambda_2\leq 1$, and $\lambda^t$ is the reflection of the Young diagram along the diagonal. So it is enough to show that this same decomposition into highest-weight modules holds in the quantised case.

Since it is semi-simple, the algebra $\bigwedge_q(\C_q^{1|1}\otimes \C_q^m)$ decomposes as a direct sum of irreducible modules over $U_q(\mathfrak{gl}(m)\oplus \mathfrak{gl}(1|1))$, which are of the form $V\otimes W$ where $V$ is an irreducible module of $U_q(\mathfrak{gl}(1|1))$ and $W$ is an irreducible module of $U_q(\mathfrak{gl}(m))$. So
\[ \bigwedge_q(\C_q^{1|1}\otimes \C_q^m) \cong \bigoplus_i V_i\otimes W_i \]
where $V_i$ and $W_i$ are highest-weight modules. To see the highest-weights occurring in this direct sum decomposition, it is enough to know the dimensions of the weight-spaces.

But by Lemma \ref{le:flat}, we know that $\bigwedge_q(\C_q^{1|1}\otimes \C_q^m)$ specialises at $q=1$ to $\bigwedge(\C^{1|1}\otimes \C^m)$, and so the dimensions of weight-spaces is the same for both modules. Therefore we have the direct sum
\[ \bigwedge_q(\C_q^{1|1}\otimes \C_q^m) \cong \bigoplus_{\lambda\in H} V_{(1|1)}(\lambda^t)\otimes V_m(\lambda) \]
of highest-weight modules of $U_q(\mathfrak{gl}(m)\oplus \mathfrak{gl}(1|1))$, as required.

As this sum is multiplicity-free, it follows that the actions of $U_q(\mathfrak{gl}(1|1))$ and $U_q(\mathfrak{gl}(m))$ generate each other's commutants.

The isomorphism
\[ \left(\bigwedge_q \C_q^{1|1}\right)^{\otimes m} \to \bigwedge_q(\C_q^{1|1}\otimes \C_q^m) \]
is constructed using the maps
\[ \phi_i:\bigwedge_q\C_q^{1|1}\to \bigwedge_q(\C_q^{1|1}\otimes \C_q^m): v\mapsto v_i, w\mapsto w_i \]
by setting $\phi=\phi_1\wedge \phi_2\wedge\cdots \wedge\phi_m$. The identification of the weight spaces then follows.
\end{proof}

\begin{lem}\label{le:actionofglm}
The action of $F_i^{(r)}\in U_q(\mathfrak{gl}(m))$ on $\bigwedge^{k_1}_q\C^{1|1}_q \otimes \cdots \otimes \bigwedge^{k_m}_q\C^{1|1}_q$ is given by \[\id \otimes \cdots \otimes M_{r,k_{i+1}}\circ M'_{k_i-r,r} \otimes \cdots \otimes \id\] and the action of $E_i^{(r)}$ is
\[ \id \otimes \cdots \otimes M_{k_i,r}\circ M'_{r,k_{i+1}-r} \otimes \cdots \otimes \id. \]
\end{lem}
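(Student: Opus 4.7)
The plan is to work entirely on the side $\bigwedge_q(\C_q^{1|1}\otimes\C_q^m)$ of the skew Howe isomorphism $\phi$ of Theorem~\ref{th:skewhoweduality}, where the $U_q(\mathfrak{gl}(m))$-action is transparent, and then to translate the answer back through $\phi$ into the language of $M$ and $M'$.

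First, observe that $F_i$ acts trivially on $v_j,w_j$ for $j\neq i$, so the coproduct formula for $U_q(\mathfrak{gl}(m))$ forces $F_i^{(r)}$ to act as the identity on all tensor factors other than the $i$th and $(i+1)$th. It therefore suffices to treat the two-factor case $\bigwedge^{k_i}_q\C^{1|1}_q\otimes\bigwedge^{k_{i+1}}_q\C^{1|1}_q$, which under $\phi$ sits inside $\bigwedge_q(\C_q^{1|1}\otimes\C_q^2)$ as the $(k_i,k_{i+1})$-weight space, generated (as an algebra in this slice) by the letters $v_i,w_i,v_{i+1},w_{i+1}$. Under $\phi$, the four basis vectors correspond to $v_i\wedge w_i^{k_i-1}\otimes(\ast)$ and $w_i^{k_i}\otimes(\ast)$ where $(\ast)$ is either $v_{i+1}\wedge w_{i+1}^{k_{i+1}-1}$ or $w_{i+1}^{k_{i+1}}$.

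Next, I would compute $F_i^{(r)}$ on these four basis vectors using the iterated coproduct $\Delta^{(r)}(F_i)$, which, modulo the $E$-part of the commutator that vanishes on this weight space, acts as the sum over choices of $r$ letters among the $v_i,w_i$ to be converted to $v_{i+1},w_{i+1}$, weighted by appropriate powers of $q$ from the $K_i^{-1}K_{i+1}$ factors. Collecting these terms against the rewriting rules \eqref{eq:relations} for the braided exterior algebra yields explicit $q$-binomial expressions in the letters $v_{i+1},w_{i+1}$. In parallel, one reads off $M_{r,k_{i+1}}\circ M'_{k_i-r,r}$ on the same four basis vectors using the formulas for $M$ and $M'$ from subsection~3.3. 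The two answers should agree term by term.

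The main obstacle is bookkeeping: the $q$-signs enter twice, once through the braided rearrangements needed to write a product of $v_i,w_i,v_{i+1},w_{i+1}$ in the normal-ordered basis, and once through the $q$-binomial coefficients appearing in $M$ and $M'$. A clean way to organise the calculation is to do the purely even case $w^{k_i}\otimes w^{k_{i+1}}$ first, where only one $q$-binomial coefficient $\chuse{k_i}{r}$ appears on each side, then handle the cases containing a single $v$ (which produce a sum of two terms distinguished by which factor the $v$ ends up in, and this is precisely the two-term structure of $M'_{k_i-r,r}(v\wedge w^{k_i-1})$), and finally the $v\wedge v$ case, which is forced by the Leibniz rule once the single-$v$ cases are verified. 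The divided-power identities for $q$-binomials needed here are the standard Vandermonde-type identities already used in checking Move~2. The claim for $E_i^{(r)}$ is proved identically, replacing $\Delta(F_i)$ with $\Delta(E_i)=E_i\otimes K_iK_{i+1}^{-1}+1\otimes E_i$ and interchanging the roles of the $i$th and $(i+1)$th factors, so that the factoring runs $M'_{r,k_{i+1}-r}$ followed by $M_{k_i,r}$.
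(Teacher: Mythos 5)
Your proposal is correct and takes essentially the same route as the paper: reduce to the two relevant adjacent factors (the paper's reduction to $m=2$), compute the action of the divided powers on the explicit basis $v_\pm,w_\pm$ of $\bigwedge_q(\C^{1|1}_q\otimes\C^2_q)$ using the coproduct and the relations \eqref{eq:relations}, and match the result against the formulas for $M$ and $M'$ -- the paper merely organises the divided-power computation by induction on the weight space and on $r$ via $F^{(l)}=\tfrac{1}{[l]}F F^{(l-1)}$ rather than your ``choose $r$ letters'' expansion, which is only a difference of bookkeeping. Two slips in wording do not affect the argument: the relevant operator is the iterated coproduct of $F_i^{(r)}$ itself (not $\Delta^{(r)}(F_i)$), and $\Delta(F_i)=F_i\otimes 1+K_i^{-1}K_{i+1}\otimes F_i$ contains no $E$-term to discard.
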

\begin{proof}
It is enough to show this for $m=2$. Let $\langle x_+,x_- \rangle$ be the standard representation of $U_q(\mathfrak{gl}(2))$ with $F(x_+)=x_-$, $E(x_-)=x_+$ and $K(x_+)=qx_+$, $K(x_-)=q^{-1}x_-$. Using the coproduct on $U_q(\mathfrak{gl}(2))$ with the basis $w_+,w_-,v_+,v_-$ of $\bigwedge_q(\C^{1|1}_q\otimes \C^2_q)$, we have
\[ F(w_+\wedge w_+)=q^{-1}w_+\wedge w_-+w_-\wedge w_+ = (q+q^{-1})w_+\wedge w_- \]
which is exactly $M'_{1,1}(w\wedge w)$. Similarly,
\[ F(v_+\wedge w_+)=q^{-1}v_+\wedge w_-+v_-\wedge w_+ = q^{-1}v_+\wedge w_- - w_+\wedge v_- \]
so the action of $F$ on the $(2,0)$ weight space is exactly the comultiplication $M'_{1,1}$.

Now we check the $(1,1)$ weight space. We have
\[ F(w_+\wedge w_-)=w_-\wedge w_- \]
\[ F(v_+\wedge w_-)=v_-\wedge w_- \]
\[ F(w_+\wedge v_-)= w_-\wedge v_-=-qv_-\wedge w_- \]
which again is exactly the action of $M_{(1,1)}$ on $\C^{1|1}_q\otimes \C^{1|1}_q$.

Now by induction on $k$ we check the action of $F$ on the $(k,0)$ weight-space. For convenience we write $w_+^k$ for $w_+\wedge w_+\wedge \cdots\wedge w_+$ and similarly for other terms. Then
\begin{align*}
F(w_+^k)&=K^{-1}w_+\wedge F(w_+^{k-1}) + F(w_+)\wedge w_+^{k-1}\\ &= q^{-1}[k-1]w_+^{k-1}\wedge w_- + w_-\wedge w_+^{k-1}\\ &= (q^{-1}[k-1]+q^{k-1})w_+^{k-1}\wedge w_-\\ &= [k] w_+^{k-1}\wedge w_-
\end{align*}
\[ F(v_+\wedge w_+^{k-1})= q^{-1}[k]v_+\wedge w_+^{k-2}\wedge w_- + (-1)^k w_+^{k-1}\wedge v_- \]
which agrees with the comultiplication.

Now by induction on $l$ we check the action of $F^{(l)}$ on $(k+l,0)$. We have
\begin{align*}
F^{(l)}(w_+^{k+l}) &= \frac{1}{[l]} F(F^{(l-1)}(w_+^{k+l}))\\
&= \frac{1}{[l]} F(\chuse{k+l}{l-1} w_+^{k+1}\wedge w_-^{l-1}) \\
&= \frac{[k+1]}{[l]}\chuse{k+l}{l-1}w_+^k \wedge w_-^l \\
&= \chuse{k+l}{l} w_+^k \wedge w_-^l
\end{align*}
\[ F^{(l)}(v_+\wedge w_+^{k+l-1}) = q^{-l}\chuse{k+l-1}{l} v_+\wedge w^{k-1}_+\wedge w^{l}_- + (-1)^k \chuse{k+l-1}{k} w_+^k \wedge v_-\wedge w_-^{l-1} \]
which is $M'_{k,l}$ as wanted.

Finally, we can check $F^{(l)}$ on the $(a,b)$ weight space by
\[ F^{(l)}(w_+^a\wedge w_-^b) = \chuse{a}{l}w_+^{a-l}\wedge w_-^{b+l} \]
\[ F^{(l)}(w_+^a\wedge v_-\wedge w_-^{b-1})= (-q)^l \chuse{a}{l} w_+^{a-l}\wedge v_-\wedge w_-^{b+l-1} \]
\[ F^{(l)}(v_+\wedge w_+^{a-1}\wedge w_-^b) = q^{-l}\chuse{a-1}{l}v_+\wedge w_+^{a-l-1}\wedge w_-^{b+l} +(-1)^{a-l} \chuse{a-1}{l-1}w_+^{a-l}\wedge v_-\wedge w_-^{b+l-1} \]
which is exactly the effect of $M_{l,b}\circ M'_{a-l,l}$ as wanted. The proof for $E$ is similar.
\end{proof}

Lemma \ref{le:actionofglm} suggests that there is a diagrammatic interpretation of the action of $U_q(\mathfrak{gl}(m))$, since it acts by the maps appearing in Section \ref{se:relationtodiagrams}. We make this interpretation explicit in the following sections.

\subsection{Idempotented Quantum \texorpdfstring{$\mathfrak{gl}(m)$}{gl(m)}}
Following Lusztig \cite{Lusztig1993}, we can define the idempotented version of the quantum group $U_q(\mathfrak{gl}(m))$, which we denote by $\dot{U}_q(\mathfrak{gl}(m))$ as follows. We define the algebra $U'_q(\mathfrak{gl}(m))$ by adjoining the additional generators $1_{\lambda}$ to $U_q(\mathfrak{gl}(m))$ for each $\lambda\in \Z^m$ with the additional relations
\[ 1_\lambda 1_{\lambda'}=\delta_{\lambda \lambda'}1_\lambda \]
\[ E_i 1_\lambda = 1_{\lambda+\alpha_i} E_i \]
\[ F_i 1_\lambda = 1_{\lambda-\alpha_i} F_i \]
\[ K_i 1_\lambda = q^{\lambda_i} 1_\lambda \]
where $\alpha_i=(0,\ldots,1,-1,\ldots,0)$ with the $1$ in position $i$, and $\lambda_i$ is the $i$th term of $\lambda$. Then we define
\[ \dot U_q(\mathfrak{gl}(m))= \bigoplus_{\lambda,\mu\in \Z^m} 1_\mu U'_q(\mathfrak{gl}(m)) 1_\lambda. \]
We will usually think of this as a category with objects $1_\lambda$ for $\lambda\in \Z^m$ and morphisms $1_\lambda\to 1_\nu$ given by $1_\nu \dot{U}_q(\mathfrak{gl}(m))1_\lambda$. Observe that $1_\nu \dot{U}_q(\mathfrak{gl}(m))1_\lambda=0$ unless $\sum_i \lambda_i=\sum_i \nu_i$.

We denote by $\dot{U}_q^\infty (\mathfrak{gl}(m))$ the quotient of the category $\dot{U}_q(\mathfrak{gl}(m))$ by the identity morphisms of the objects $1_\lambda$ with $\lambda_i<0$ for some $i$.

\subsection{Ladders}\label{se:ladders}
Ladders were introduced in \cite{Cautis2012} to give a diagrammatic description of certain subcategories of $\dot{U}_q (\mathfrak{gl}(m))$. We use them here to give a similar description of $\dot{U}_q^\infty (\mathfrak{gl}(m))$.
\begin{definition}A ladder with $m$ uprights is a diagram in $[0,1]\times [0,1]$ with $m$ oriented vertical lines oriented upwards from the bottom edge to the top edge, some number of horizontal rungs between adjacent uprights, and a natural number labelling each rung and segment of uprights such that the algebraic sum of labels at each trivalent vertex is 0.
\end{definition}

\begin{definition}
The category $\operatorname{FLad}_m$ is defined to have objects $(k_1,\ldots,k_m)$ for $k_i\geq 0$, and morphisms $(k_1,\ldots,k_m)\to (l_1,\ldots,l_m)$ given by linear combinations of ladders such that the bottom segments of the uprights have labels $(k_1,\ldots,k_m)$ in order, and the top segments have labels $(l_1,\ldots,l_m)$.
\end{definition}

The $F$ in $\operatorname{FLad}_m$ is for `free', to distinguish with $\operatorname{Lad}_m$ defined below. We can define a functor $\psi_m:\operatorname{FLad}_m\to \dot{U}_q^\infty (\mathfrak{gl}(m))$ by $(k_1,\ldots,k_m)\mapsto 1_{(k_1,\ldots,k_m)}$ on objects, and on morphisms:
\[ \begin{tikzpicture}[baseline=-0.65ex]
\draw (-0.75,-1) -- (-0.75,1);
\draw (0.75,-1) -- (0.75,1);
\draw (-0.75,0.25) -- (0.75,-0.25);
\draw (-2,-1) -- (-2,1);
\draw (2,-1) -- (2,1);
\draw (0,0.3) node {$r$};
\draw (-1,-1.25) node {$k_i$};
\draw (-1,1.25) node {$k_i+r$};
\draw (-1.4,0) node {$\cdots$};
\draw (1.4,0) node {$\cdots$};
\draw (1,-1.25) node {$k_{i+1}$};
\draw (1,1.25) node {$k_{i+1}-r$};
\draw (-2,-1.25) node {$k_1$};
\draw (2,-1.25) node {$k_m$};
\draw (-2,1.25) node {$k_1$};
\draw (2,1.25) node {$k_m$};
\end{tikzpicture} \mapsto E^{(r)}_i 1_{\underline{k}} \]

\[ \begin{tikzpicture}[baseline=-0.65ex]
\draw (-0.75,-1) -- (-0.75,1);
\draw (0.75,-1) -- (0.75,1);
\draw (-0.75,-0.25) -- (0.75,0.25);
\draw (-2,-1) -- (-2,1);
\draw (2,-1) -- (2,1);
\draw (0,0.3) node {$r$};
\draw (-1,-1.25) node {$k_i$};
\draw (-1,1.25) node {$k_i-r$};
\draw (-1.4,0) node {$\cdots$};
\draw (1.4,0) node {$\cdots$};
\draw (1,-1.25) node {$k_{i+1}$};
\draw (1,1.25) node {$k_{i+1}+r$};
\draw (-2,-1.25) node {$k_1$};
\draw (2,-1.25) node {$k_m$};
\draw (-2,1.25) node {$k_1$};
\draw (2,1.25) node {$k_m$};
\end{tikzpicture} \mapsto F^{(r)}_i 1_{\underline{k}} \]
where $F_i^{(r)}=F_i^r/[r]!$ and $E_i^{(r)}=E_i^r/[r]!$. Here the uprights are understood to be oriented upwards, and we have drawn the rungs slanting upwards to indicate their orientation. We will stick to this convention henceforth.

\begin{thm}\label{th:ladders}
The kernel of $\psi_m$ is generated by the following relations:
\[ \begin{tikzpicture}[baseline=-0.65ex]
\draw (-1,-1) -- (-1,1);
\draw (0,-1) -- (0,1);
\draw (1,-1) -- (1,1);
\draw (-1,-0.6) -- (0,-0.2);
\draw (1,0.2) -- (0,0.6);
\draw (-1,-1.25) node {$k_1$};
\draw (0,-1.25) node {$k_2$};
\draw (1,-1.25) node {$k_3$};
\draw (-1.6,0.75) node {$k_1-r$};
\draw (0,1.25) node {$k_2+r+s$};
\draw (1.6,0.75) node {$k_3-s$};
\draw (0.5,0.2) node {$s$};
\draw (-0.5,-0.2) node {$r$};
\end{tikzpicture} =  \begin{tikzpicture}[baseline=-0.65ex]
\draw (-1,-1) -- (-1,1);
\draw (0,-1) -- (0,1);
\draw (1,-1) -- (1,1);
\draw (-1,0.2) -- (0,0.6);
\draw (1,-0.6) -- (0,-0.2);
\draw (-1,-1.25) node {$k_1$};
\draw (0,-1.25) node {$k_2$};
\draw (1,-1.25) node {$k_3$};
\draw (-1.6,0.75) node {$k_1-r$};
\draw (0,1.25) node {$k_2+r+s$};
\draw (1.6,0.75) node {$k_3-s$};
\draw (0.5,-0.2) node {$s$};
\draw (-0.5,0.2) node {$r$};
\end{tikzpicture} \]

\[ \begin{tikzpicture}[baseline=-0.65ex]
\draw (-1,-1) -- (-1,1);
\draw (0,-1) -- (0,1);
\draw (1,-1) -- (1,1);
\draw (-1,-0.2) -- (0,-0.6);
\draw (1,0.6) -- (0,0.2);
\draw (-1,-1.25) node {$k_1$};
\draw (0,-1.25) node {$k_2$};
\draw (1,-1.25) node {$k_3$};
\draw (-1.6,0.75) node {$k_1+r$};
\draw (0,1.25) node {$k_2-r-s$};
\draw (1.6,0.75) node {$k_3+s$};
\draw (0.5,0.2) node {$s$};
\draw (-0.5,-0.2) node {$r$};
\end{tikzpicture} =  \begin{tikzpicture}[baseline=-0.65ex]
\draw (-1,-1) -- (-1,1);
\draw (0,-1) -- (0,1);
\draw (1,-1) -- (1,1);
\draw (-1,0.6) -- (0,0.2);
\draw (1,-0.2) -- (0,-0.6);
\draw (-1,-1.25) node {$k_1$};
\draw (0,-1.25) node {$k_2$};
\draw (1,-1.25) node {$k_3$};
\draw (-1.6,0.75) node {$k_1+r$};
\draw (0,1.25) node {$k_2-r-s$};
\draw (1.6,0.75) node {$k_3+s$};
\draw (0.5,-0.2) node {$s$};
\draw (-0.5,0.2) node {$r$};
\end{tikzpicture} \]

\[ \begin{tikzpicture}[baseline=-0.65ex]
\draw (-0.5,-1) -- (-0.5,1);
\draw (0.5,-1) -- (0.5,1);
\draw (-0.5,-0.6) -- (0.5,-0.2);
\draw (-0.5,0.2) -- (0.5,0.6);
\draw (-0.5,-1.25) node {$k_1$};
\draw (0.5,-1.25) node {$k_2$};
\draw (-1,1.25) node {$k_1-r-s$};
\draw (1,1.25) node {$k_2+r+s$};
\draw (0,-0.6) node {$r$};
\draw (0,0.6) node {$s$};
\end{tikzpicture} = \chuse{r+s}{s}
\begin{tikzpicture}[baseline=-0.65ex]
\draw (-0.5,-1) -- (-0.5,1);
\draw (0.5,-1) -- (0.5,1);
\draw (-0.5,-0.2) -- (0.5,0.2);
\draw (-0.5,-1.25) node {$k_1$};
\draw (0.5,-1.25) node {$k_2$};
\draw (-1,1.25) node {$k_1-r-s$};
\draw (1,1.25) node {$k_2+r+s$};
\draw (0,-0.3) node {$r+s$};
\end{tikzpicture}
 \]
 
\[ \begin{tikzpicture}[baseline=-0.65ex]
\draw (-0.5,-1) -- (-0.5,1);
\draw (0.5,-1) -- (0.5,1);
\draw (-0.5,-0.6) -- (0.5,-0.2);
\draw (0.5,0.2) -- (-0.5,0.6);
\draw (-0.5,-1.25) node {$k_1$};
\draw (0.5,-1.25) node {$k_2$};
\draw (-1,1.25) node {$k_1-s+r$};
\draw (1,1.25) node {$k_2+s-r$};
\draw (0,-0.6) node {$s$};
\draw (0,0.6) node {$r$};
\draw (-1.1,0) node {$k_1-s$};
\draw (1.1,0) node {$k_2+s$};
\end{tikzpicture} = \sum_t \chuse{k_1-k_2+r-s}{t}
\begin{tikzpicture}[baseline=-0.65ex]
\draw (-0.5,-1) -- (-0.5,1);
\draw (0.5,-1) -- (0.5,1);
\draw (-0.5,-0.2) -- (0.5,-0.6);
\draw (0.5,0.6) -- (-0.5,0.2);
\draw (-0.5,-1.25) node {$k_1$};
\draw (0.5,-1.25) node {$k_2$};
\draw (-1,1.25) node {$k_1-r+s$};
\draw (1,1.25) node {$k_2+r-s$};
\draw (0,-0.7) node {$r-t$};
\draw (0,0.7) node {$s-t$};
\draw (-1.4,0) node {$k_1+r-t$};
\draw (1.4,0) node {$k_2-r+t$};
\end{tikzpicture}
\]

\[ \begin{tikzpicture}[baseline=-0.65ex]
\draw (-1,-1.4) -- (-1,1.4);
\draw (0,-1.4) -- (0,1.4);
\draw (1,-1.4) -- (1,1.4);
\draw (-1,-1) -- (0,-0.6);
\draw (-1,-0.2) -- (0,0.2);
\draw (0,0.6) -- (1,1);
\draw (-1,-1.65) node {$k_1$};
\draw (0,-1.65) node {$k_2$};
\draw (1,-1.65) node {$k_3$};
\draw (-0.5,-1) node {$1$};
\draw (-0.5,0.2) node {$1$};
\draw (0.5,1) node {$1$};
\end{tikzpicture}
- [2] 
\begin{tikzpicture}[baseline=-0.65ex]
\draw (-1,-1.4) -- (-1,1.4);
\draw (0,-1.4) -- (0,1.4);
\draw (1,-1.4) -- (1,1.4);
\draw (-1,-1) -- (0,-0.6);
\draw (0,-0.2) -- (1,0.2);
\draw (-1,0.6) -- (0,1);
\draw (-1,-1.65) node {$k_1$};
\draw (0,-1.65) node {$k_2$};
\draw (1,-1.65) node {$k_3$};
\draw (-0.5,-1) node {$1$};
\draw (0.5,0.2) node {$1$};
\draw (-0.5,1) node {$1$};
\end{tikzpicture} + 
\begin{tikzpicture}[baseline=-0.65ex]
\draw (-1,-1.4) -- (-1,1.4);
\draw (0,-1.4) -- (0,1.4);
\draw (1,-1.4) -- (1,1.4);
\draw (0,-1) -- (1,-0.6);
\draw (-1,-0.2) -- (0,0.2);
\draw (-1,0.6) -- (0,1);
\draw (-1,-1.65) node {$k_1$};
\draw (0,-1.65) node {$k_2$};
\draw (1,-1.65) node {$k_3$};
\draw (0.5,-1) node {$1$};
\draw (-0.5,0.2) node {$1$};
\draw (-0.5,1) node {$1$};
\end{tikzpicture} = 0
\]
\[\begin{tikzpicture}[baseline=-0.65ex]
\draw (-0.5,-1) -- (-0.5,1);
\draw (0.5,-1) -- (0.5,1);
\draw (-0.5,-0.6) -- (0.5,-0.6);
\draw (1.5,-1) -- (1.5,1);
\draw (2.5,-1) -- (2.5,1);
\draw (1.5,0.6) -- (2.5,0.6);
\draw (-0.5,-1.25) node {$k_1$};
\draw (0.5,-1.25) node {$k_2$};
\draw (1.5,-1.25) node {$k_3$};
\draw (2.5,-1.25) node {$k_4$};
\draw (0,-0.6) node [label=above:$r$] {};
\draw (2,0.6) node [label=above:$s$] {};
\draw (-1,1.25) node {};
\draw (1,1.25) node {};
\draw (-1.1,0) node {};
\draw (1.1,0) node {};
\draw (1,0) node {$\cdots$};
\end{tikzpicture} =
\begin{tikzpicture}[baseline=-0.65ex]
\draw (-0.5,-1) -- (-0.5,1);
\draw (0.5,-1) -- (0.5,1);
\draw (-0.5,0.6) -- (0.5,0.6);
\draw (1.5,-1) -- (1.5,1);
\draw (2.5,-1) -- (2.5,1);
\draw (1.5,-0.6) -- (2.5,-0.6);
\draw (-0.5,-1.25) node {$k_1$};
\draw (0.5,-1.25) node {$k_2$};
\draw (1.5,-1.25) node {$k_3$};
\draw (2.5,-1.25) node {$k_4$};
\draw (0,0.6) node [label=above:$r$] {};
\draw (2,-0.6) node [label=above:$s$] {};
\draw (1,1.25) node {};
\draw (1.1,0) node {};
\draw (1,0) node {$\cdots$};
\end{tikzpicture}
\]
with either orientation on each of the rungs in the last relation as long as the two $r$-coloured rungs have the same orientation, and similarly for the two $s$-coloured rungs. We also include mirror images of the third and fifth relations, and all relations are considered to have arbitrarily many uprights on each side.
\end{thm}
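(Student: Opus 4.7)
The plan is to proceed in two steps: first I would check that each listed relation lies in $\ker \psi_m$, and then I would argue that these relations generate the entire kernel.

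For the first step, I would recognise each relation as a divided-power identity in $\dot U_q(\mathfrak{gl}(m))$. Relations~1 and~2 correspond to $E_j^{(s)} F_i^{(r)} 1_{\underline{k}} = F_i^{(r)} E_j^{(s)} 1_{\underline{k}}$ for adjacent indices $i \neq j$; Relation~3 is the divided power fusion $F_i^{(s)} F_i^{(r)} 1_{\underline{k}} = \chuse{r+s}{s} F_i^{(r+s)} 1_{\underline{k}}$; Relation~4 is the divided-power $EF$ straightening relation
\[ E_i^{(r)} F_i^{(s)} 1_{\underline{k}} = \sum_t \chuse{k_i - k_{i+1} + r - s}{t} F_i^{(s-t)} E_i^{(r-t)} 1_{\underline{k}}; \]
Relation~5 is the quantum Serre relation in divided-power form at adjacent indices; and the final relation is distant commutativity $X_i Y_j = Y_j X_i$ for $X, Y \in \{E, F\}$ whenever the supports of $X_i$ and $Y_j$ share no upright. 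Each of these is standard in $\dot U_q(\mathfrak{gl}(m))$, see Lusztig \cite{Lusztig1993}, so this step is a direct verification.

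For the second step, the key input is a presentation of $\dot U_q(\mathfrak{gl}(m))$ by divided-power generators subject to exactly these relations. Since $\psi_m$ is surjective on Hom-spaces by construction (each generator $E_i^{(r)} 1_{\underline{k}}$ or $F_i^{(r)} 1_{\underline{k}}$ is the image of a single rung), its kernel is controlled by any such presentation. The passage from $\dot U_q(\mathfrak{gl}(m))$ to $\dot U_q^\infty(\mathfrak{gl}(m))$ only kills the identity morphisms of $1_\lambda$ with $\lambda_i < 0$ for some $i$, which is automatic on the ladder side since the labels at the top and bottom of every rung are required to be non-negative integers. The overall argument closely parallels the proof of the analogous statement for $\dot U_q(\mathfrak{sl}(n))$ in \cite{Cautis2012}.

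The main obstacle will be verifying sufficiency: that the listed relations really do generate all of $\ker \psi_m$. My strategy would be to show that repeated application of the $EF$-straightening relation (Relation~4), the fusion relation (Relation~3), distant commutativity (Relation~6) and the Serre relation (Relation~5) suffices to rewrite any composition of rungs into a canonical form, for instance with all $F$-rungs below all $E$-rungs, and then ordered lexicographically by upright index. This normal-form reduction is controlled by identities among quantum binomial coefficients, and once established, equality of two ladders representing the same morphism in $\dot U_q^\infty(\mathfrak{gl}(m))$ reduces to comparing their canonical forms.
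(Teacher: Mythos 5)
Your first step (identifying each relation with a divided-power identity in $\dot U_q(\mathfrak{gl}(m))$) and your appeal to a presentation argument parallel to \cite{Cautis2012} is exactly what the paper does: its proof consists of checking the relations on the divided powers $E_i^{(r)}$, $F_i^{(r)}$ and then deferring completeness to \cite[Proposition~5.1.2]{Cautis2012}. Had you stopped there, the proposal would match the paper. The issue is the concrete strategy you offer for the sufficiency step, which is where the real content lies.

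The proposed normal form --- all $F$-rungs below all $E$-rungs, with each block ordered lexicographically by upright index --- does not exist once $m\geq 3$. Lexicographically ordered monomials in the simple root vectors $F_1,\dots,F_{m-1}$ do not span $U_q^-(\mathfrak{gl}(m))$: already for $m=3$ the weight space of $U^-$ in degree $\alpha_1+\alpha_2$ is two-dimensional (spanned by $F_1F_2$ and $F_2F_1$), while lex-ordered monomials contribute only $F_1F_2$; a spanning family requires PBW monomials in root vectors for non-simple roots (or the canonical basis), and the Serre relation alone does not let you reorder an arbitrary $F$-word into lexicographic order. Moreover, even with a corrected family of normal forms, "comparing canonical forms" only proves that the listed relations generate the kernel if you also show that the images of distinct normal forms are linearly independent in the target $\dot U_q^\infty(\mathfrak{gl}(m))$ --- and this target is a quotient of $\dot U_q(\mathfrak{gl}(m))$ in which morphisms between non-negative weights can vanish when they factor through a weight with a negative entry, so independence is not automatic and needs the triangular decomposition of $\dot U$ together with an analysis of the ideal generated by the discarded idempotents. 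None of this is supplied. The route taken in \cite{Cautis2012} (and implicitly in the paper) avoids rewriting altogether: one uses the known presentation of $\dot U_q(\mathfrak{gl}(m))$ by the generators $E_i 1_\lambda$, $F_i 1_\lambda$ and the idempotented defining relations, constructs an inverse functor $\dot U_q^\infty(\mathfrak{gl}(m))\to\operatorname{Lad}_m$ by sending these generators to single rungs and the objects $1_\lambda$ with a negative entry to zero (checking the defining relations are the $r=s=1$ cases of your list), and then uses your Relation~3, i.e.\ $F_i^{(r)}F_i^{(s)}1_\lambda=\chuse{r+s}{s}F_i^{(r+s)}1_\lambda$, to identify a divided-power rung with $\tfrac{1}{[r]!}$ times a composite of simple rungs, so that the two functors are mutually inverse. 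If you want a self-contained proof you should follow that outline (or prove a basis theorem for $\dot U^\infty$ first); the rewriting argument as you describe it would not go through.
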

\begin{proof}
This is essentially \cite[Proposition~5.1.2]{Cautis2012}. The proof involves checking the relations on the divided powers $F^{(r)}$ and $E^{(r)}$ from the relations in Section \ref{se:glm}. For example, the third relation comes from
\[ F_i^{(r)}F_i^{(s)}1_\lambda = \chuse{r+s}{s} F^{(r+s)}1_\lambda. \]
The others are similar.
\end{proof}

\begin{definition}
We define $\operatorname{Lad}_m$ to be the quotient of $\operatorname{FLad}_m$ by the relations in Theorem \ref{th:ladders}.
\end{definition}

\begin{cor}\label{co:lad}
The induced functors
\[ \psi_m: \operatorname{Lad}_m \to \dot{U}_q^\infty (\mathfrak{gl}(m)) \]
are equivalences of categories.
\end{cor}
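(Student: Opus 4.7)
The plan is to verify the three requirements for an equivalence of categories—essential surjectivity, fullness, and faithfulness—each of which should reduce quickly to a fact already at hand. Since Theorem \ref{th:ladders} identifies the kernel of $\psi_m\colon \operatorname{FLad}_m \to \dot{U}_q^\infty(\mathfrak{gl}(m))$ with the tensor ideal generated by the listed relations, this corollary is essentially a bookkeeping consequence of that theorem together with a standard generating set for $\dot{U}_q(\mathfrak{gl}(m))$.

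First I would handle essential surjectivity and fullness together. The objects of $\operatorname{Lad}_m$ are tuples $(k_1,\ldots,k_m)$ with $k_i \geq 0$, sent by $\psi_m$ to the idempotents $1_{(k_1,\ldots,k_m)}$; any other object $1_\lambda$ of $\dot{U}_q^\infty(\mathfrak{gl}(m))$ has its identity morphism sent to zero by construction, hence is a zero object that need not lie strictly in the image of an equivalence. For fullness, I would use that $\dot{U}_q(\mathfrak{gl}(m))$ is generated as a category by the elementary morphisms $E_i 1_\lambda$ and $F_i 1_\lambda$ (equivalently their divided powers $E_i^{(r)} 1_\lambda$ and $F_i^{(r)} 1_\lambda$), each of which is by definition the image under $\psi_m$ of a single ladder rung of the appropriate orientation and colour. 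Composing and taking linear combinations of such rungs therefore exhibits every element of $1_\mu \dot{U}_q^\infty(\mathfrak{gl}(m)) 1_\lambda$ as the image of a morphism in $\operatorname{Lad}_m$.

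For faithfulness I would simply invoke Theorem \ref{th:ladders}: by the definition of $\operatorname{Lad}_m$ as the quotient of $\operatorname{FLad}_m$ by the relations that generate $\ker \psi_m$, the induced functor is automatically injective on hom-spaces. The real obstacle in this programme is therefore Theorem \ref{th:ladders} rather than the corollary itself; all of the delicate verification—translating identities such as $F_i^{(r)} F_i^{(s)} 1_\lambda = \chuse{r+s}{s} F_i^{(r+s)} 1_\lambda$, the mixed $EF$ commutator, and the quantum Serre relations into the ladder relations—has already been carried out there, modelled on the $\mathfrak{sl}_n$ argument of \cite{Cautis2012}.
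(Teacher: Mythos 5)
Your proposal is correct and matches the paper's (implicit) reasoning: the corollary is stated without further proof precisely because faithfulness is the content of Theorem \ref{th:ladders}, fullness follows since the divided powers $E_i^{(r)}1_\lambda$, $F_i^{(r)}1_\lambda$ generate all morphism spaces of $\dot{U}_q^\infty(\mathfrak{gl}(m))$ and are images of single rungs, and the objects $1_\lambda$ with a negative entry are treated as zero objects exactly as you do. Your bookkeeping of these three points is the standard fleshing-out of the same argument, so nothing further is needed.
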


Note that we can make the non-MOY relation \ref{eqn:nonmoy} into a ladder in $\dot U_q(\mathfrak{gl}(4))$. In addition, by attaching the diagram
\[
\begin{tikzpicture}[scale=0.5]
\draw (0,1) -- (0,3);
\draw (0,3) to [out=90,in=270] (1,4);
\draw (1,4) to [out=270,in=90] (2,3);
\draw (2,2) -- (2,3);
\draw (2,2) to [out=270,in=90] (1,1);
\draw (2,2) to [out=270,in=90] (3,1);
\draw (2,3) to [out=90,in=270] (3,4);
\draw (4,1) -- (4,3);
\draw (4,3) to [out=90,in=270] (3,4);
\draw (3,4) -- (3,5);
\draw (1,4) -- (1,5);
\coordinate[label=left:$k-t$] (kt) at (0,1.5);
\coordinate[label=right:$l-s$] (ls) at (4,1.5);
\coordinate[label=left:$k$] (k) at (1,4.5);
\coordinate[label=right:$l$] (l) at (3,4.5);
\coordinate[label=left:$t+s$] (ts) at (2,2.5);
\end{tikzpicture}
\]
to the top of each of the diagrams in the relation, we can also derive a relation involving diagrams that can be made into ladders in $\dot U_q(\mathfrak{gl}(2))$. When $t=s=1$, we use the MOY moves and simplify the quantum integers to obtain
\begin{equation}\label{eq:nonmoy2}
[k+1][k][l][l-1] \left(\begin{tikzpicture}[baseline=-0.65ex]
\draw (-0.5,-1) -- (-0.5,1);
\draw (0.5,-1) -- (0.5,1);
\coordinate[label=left:$k$] (k) at (-0.5,0.5);
\coordinate[label=right:$l$] (l) at (0.5,0.5);
\end{tikzpicture}\right)
- [2][k+1][l-1] \left(\begin{tikzpicture}[baseline=-0.65ex]
\draw (-0.5,-1) -- (-0.5,1);
\draw (0.5,-1) -- (0.5,1);
\draw (-0.5,-0.2) -- (0.5,-0.6);
\draw (0.5,0.6) -- (-0.5,0.2);
\draw (-0.5,-1.25) node {$k$};
\draw (0.5,-1.25) node {$l$};
\draw (-0.5,1.25) node {$k$};
\draw (0.5,1.25) node {$l$};
\draw (0,-0.7) node {$1$};
\draw (0,0.7) node {$1$};
\draw (-1.2,0) node {$k+1$};
\draw (1.2,0) node {$l-1$};
\end{tikzpicture}\right)
+ [2]^2 \left(\begin{tikzpicture}[baseline=-0.65ex]
\draw (-0.5,-1) -- (-0.5,1);
\draw (0.5,-1) -- (0.5,1);
\draw (-0.5,-0.2) -- (0.5,-0.6);
\draw (0.5,0.6) -- (-0.5,0.2);
\draw (-0.5,-1.25) node {$k$};
\draw (0.5,-1.25) node {$l$};
\draw (-0.5,1.25) node {$k$};
\draw (0.5,1.25) node {$l$};
\draw (0,-0.7) node {$2$};
\draw (0,0.7) node {$2$};
\draw (-1.2,0) node {$k+2$};
\draw (1.2,0) node {$l-2$};
\end{tikzpicture}\right)
\end{equation}
as a derived relation.

\begin{definition} We define $\operatorname{Lad}^\Xi_m$ to be the quotient of $\operatorname{Lad}_m$ by relations derived from the relation \ref{eqn:nonmoy} perhaps by attaching additional uprights with no rungs between them to either side.\end{definition}

\begin{definition} We define $\dot{U}_q^\Xi (\mathfrak{gl}(m))$ to be the quotient of $\dot{U}_q^\infty (\mathfrak{gl}(m))$ such that the induced functor
\[ \psi_m: \operatorname{Lad}^\Xi_m \to \dot{U}_q^\Xi (\mathfrak{gl}(m))\]
is an equivalence of categories.
\end{definition}

We denote by $\Xi$ the $2$-sided ideal in $\dot{U}_q^\infty (\mathfrak{gl}(m))$ generated by the image of the relation in Section \ref{se:additional} under the equivalence $\psi_m$.

\subsection{Relationship between idempotented quantum \texorpdfstring{$\mathfrak{gl}(m)$}{gl(m)} and representations of \texorpdfstring{$\mathfrak{gl}(1|1)$}{gl(1|1)}}
Let $\operatorname{Rep}$ be the additive category of $U_q(\mathfrak{gl}(1|1))$-modules monoidally generated by
\[ \bigwedge^{k_1}_q\C^{1|1}_q \otimes \cdots \otimes \bigwedge^{k_s}_q\C^{1|1}_q \]
where $s\in \N$ and $k_i\geq 0$ for all $i$.

For each $m$ we can define a functor
\[ \phi_m: \dot{U}_q(\mathfrak{gl}(m)) \to \operatorname{Rep} \]
by sending $1_\lambda$ to $\bigwedge^{\lambda_1}_q\C^{1|1}_q \otimes \cdots \otimes \bigwedge^{\lambda_m}_q\C^{1|1}_q$ if $\lambda_i\geq 0$ for all $i$, or to $0$ else. The map on morphisms is given by
\[ 1_\nu \dot{U}_q(\mathfrak{gl}(m))1_\lambda \to \operatorname{Hom}_{U_q(\mathfrak{gl}(1|1))}\left( \bigwedge^{\lambda_1}_q\C^{1|1}_q \otimes \cdots \otimes \bigwedge^{\lambda_m}_q\C^{1|1}_q, \bigwedge^{\nu_1}_q\C^{1|1}_q \otimes \cdots \otimes \bigwedge^{\nu_m}_q\C^{1|1}_q\right) \]
which is defined using Theorem \ref{th:skewhoweduality}, since the action of $U_q(\mathfrak{gl}(1|1))$ on the weight-spaces commutes with the action of $\mathfrak{gl}(1|1)$. The fact that the actions generate each other's commutants implies that $\phi_m$ is full.

From the definition of $\dot U_q^\infty (\mathfrak{gl}(m))$, we immediately get the following:
\begin{cor}\label{co:full} The induced functor
\[ \phi:\bigoplus_m \dot{U}_q^\infty (\mathfrak{gl}(m)) \to \operatorname{Rep} \]
is full.
\end{cor}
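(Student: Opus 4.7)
The plan is to deduce fullness directly from the commutant half of Theorem~\ref{th:skewhoweduality}. Since both categories are additive and $\phi$ is additive, it suffices to check surjectivity on hom spaces between indecomposable generators, i.e.\ tensor products $V = \bigwedge^{k_1}_q\C^{1|1}_q \otimes \cdots \otimes \bigwedge^{k_s}_q\C^{1|1}_q$ and $W = \bigwedge^{l_1}_q\C^{1|1}_q \otimes \cdots \otimes \bigwedge^{l_{s'}}_q\C^{1|1}_q$. Given a morphism $f\colon V \to W$ of $U_q(\mathfrak{gl}(1|1))$-modules, I would choose $m \geq \max(s,s')$ and pad both sides with trivial factors using the canonical identification $\bigwedge^0_q \C^{1|1}_q \cong \C(q)$, so that $V$ and $W$ become the $\lambda$- and $\nu$-weight spaces of $\bigwedge_q(\C^{1|1}_q \otimes \C_q^m)$ for $\lambda = (k_1,\ldots,k_s,0,\ldots,0)$ and $\nu = (l_1,\ldots,l_{s'},0,\ldots,0)$, both in $\Z^m_{\geq 0}$. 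This identification is exactly the one produced by Theorem~\ref{th:skewhoweduality}.

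The key step is then to extend $f$ by zero on all other $U_q(\mathfrak{gl}(m))$-weight spaces of $\bigwedge_q(\C^{1|1}_q \otimes \C_q^m)$ to obtain a linear endomorphism $\tilde f$. Because the two actions commute, $U_q(\mathfrak{gl}(1|1))$ preserves each weight space, and on each space $\tilde f$ is either $f$ (on $V_\lambda$) or $0$; hence $\tilde f$ is $U_q(\mathfrak{gl}(1|1))$-linear, i.e.\ lies in the commutant of $U_q(\mathfrak{gl}(1|1))$. By the commutant statement in Theorem~\ref{th:skewhoweduality}, there exists $x \in U_q(\mathfrak{gl}(m))$ whose action on $\bigwedge_q(\C^{1|1}_q \otimes \C_q^m)$ agrees with $\tilde f$. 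Then $1_\nu \cdot x \cdot 1_\lambda \in \dot{U}_q(\mathfrak{gl}(m))$ is sent by $\phi_m$ to $f$: projecting first to $V_\lambda$, acting by $x$, and projecting onto $V_\nu$ recovers $\tilde f|_{V_\lambda} = f$. Since $\lambda$ and $\nu$ both have non-negative entries, this morphism survives in the quotient $\dot{U}_q^\infty(\mathfrak{gl}(m))$.

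There is no hard obstacle here; the only subtlety is a compatibility check, namely that padding $V$ and $W$ with additional trivial tensor factors $\bigwedge^0_q\C^{1|1}_q \cong \C(q)$ does not change the hom space and is compatible with the identification arising from Theorem~\ref{th:skewhoweduality}. This is immediate from the unitality of the tensor product. Assembling the full functors $\phi_m$ into $\phi$ over all $m$ then yields the claim, since any single hom in $\operatorname{Rep}$ is already captured for $m$ large enough.
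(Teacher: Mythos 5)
Your proposal is correct and follows essentially the same route as the paper: the paper defines $\phi_m$ on morphisms via Theorem \ref{th:skewhoweduality} and deduces fullness in one line from the fact that the two actions generate each other's commutants, then passes to $\dot{U}_q^\infty(\mathfrak{gl}(m))$. Your argument simply spells out that one-line deduction (extend a $U_q(\mathfrak{gl}(1|1))$-morphism between weight spaces by zero, invoke the commutant statement to find $x$, and cut with the idempotents $1_\nu,1_\lambda$), together with the zero-padding identification that the paper leaves implicit.
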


Clearly this functor cannot be faithful, because of the relation in Section \ref{se:additional} which cannot be derived from any relation in $\dot{U}_q(\mathfrak{gl}(m))$. However, we will show that this relation generates the entire kernel of the functor. 

We say a weight $\mu$ is dominated by $\lambda$ if it lies in the Weyl group orbit of a dominant weight $\mu'$ such that $\lambda\geq \mu'$. A weight $\mu$ dominated by $\lambda$ satisfies $|\mu|=|\lambda|$. In particular, a weight $\mu$ dominated by $(K,0,\ldots,0)$ satisfies $|\mu|=K$. We will consider the algebra
\[ \bigoplus_{K\in\N}\dot{U}_q(\mathfrak{gl}(m))/I_{(K,0,\ldots,0)} \]
where $I_{\lambda}$ is the two-sided ideal of $\dot{U}_q(\mathfrak{gl}(m))$ generated by the objects $1_\mu$ corresponding to those weights $\mu$ that are not dominated by $\lambda$. 

\begin{lem}\label{algiso}
Let $\lambda$ be a dominant weight, and let $L(\lambda)$ be the set of all dominant weights dominated by $\lambda$. Then there is an isomorphism of algebras
\[ \dot{U}_q(\mathfrak{gl}(m))/I_{\lambda} \to \bigoplus_{l\in L(\lambda)}\operatorname{End}_{\C(q)}(V_m(l)). \]
\end{lem}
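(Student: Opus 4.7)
The plan is to construct the algebra map from the action of $\dot{U}_q(\mathfrak{gl}(m))$ on the direct sum $\bigoplus_{l\in L(\lambda)} V_m(l)$ and then verify separately that this map is surjective and injective. For each dominant $l\in L(\lambda)$, every weight $\mu$ of the irreducible module $V_m(l)$ lies in the Weyl orbit of some dominant $\mu'\leq l$, and since $l\leq\lambda$ we have $\mu'\leq\lambda$, so $\mu$ is dominated by $\lambda$. Consequently each generator $1_\mu$ of $I_\lambda$ (for $\mu$ not dominated by $\lambda$) acts as zero on $V_m(l)$, and the action descends to a well-defined homomorphism $\Phi:\dot{U}_q(\mathfrak{gl}(m))/I_\lambda\to\bigoplus_{l\in L(\lambda)}\operatorname{End}_{\C(q)}(V_m(l))$.

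For surjectivity, each $V_m(l)$ is absolutely simple over the field $\C(q)$, so the Jacobson density theorem gives that the image of $\dot{U}_q$ in each individual $\operatorname{End}_{\C(q)}(V_m(l))$ is already the full endomorphism algebra. To separate the summands, one uses that the quantum Harish-Chandra isomorphism supplies central elements of $U_q(\mathfrak{gl}(m))$ taking distinct scalar values on distinct $V_m(l)$: since distinct dominant weights have distinct images in the Weyl-quotient after the usual $\rho$-shift, they have distinct central characters. Lagrange interpolation in such central elements then produces elements of $\dot{U}_q$ acting as the identity on any chosen $V_m(l_0)$ and as zero on the remaining summands; combining these separation idempotents with density on each factor proves surjectivity of $\Phi$.

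The principal obstacle is injectivity. The algebra $\dot{U}_q(\mathfrak{gl}(m))/I_\lambda$ is precisely the generalised $q$-Schur algebra introduced by Doty \cite{Doty2003}, and over the field $\C(q)$, where $q$ is not a root of unity, it is finite-dimensional and semisimple, with simple modules exactly the $V_m(l)$ for $l\in L(\lambda)$. By Wedderburn-Artin it is therefore abstractly isomorphic to $\bigoplus_{l\in L(\lambda)}\operatorname{End}_{\C(q)}(V_m(l))$, and in particular both source and target of $\Phi$ have the same $\C(q)$-dimension $\sum_{l\in L(\lambda)}(\dim_{\C(q)} V_m(l))^2$. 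A surjective homomorphism between finite-dimensional algebras of equal dimension is an isomorphism, which concludes the argument. A more self-contained approach would instead establish the dimension equality directly using a Lusztig canonical basis of $\dot{U}_q(\mathfrak{gl}(m))$ and showing that its elements surviving the quotient by $I_\lambda$ are indexed by pairs of weight-space basis vectors in the various $V_m(l)$.
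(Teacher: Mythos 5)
Your argument is essentially correct, but be aware that it is not the paper's: the paper proves this lemma purely by citation, observing that it is Lemma~4.4.2 of \cite{Cautis2012}. What you do differently is supply the skeleton of an actual proof: well-definedness by noting that every weight of $V_m(l)$ has dominant representative $\mu'\leq l\leq\lambda$, so the generators $1_\mu$ of $I_\lambda$ kill each $V_m(l)$; surjectivity by Schur's lemma plus Jacobson density on each factor and separation of the finitely many summands by central characters; injectivity by identifying the quotient with Doty's generalised $q$-Schur algebra. The first two steps are genuinely self-contained and fine (a small streamlining: since the $V_m(l)$ are pairwise non-isomorphic simples with $\operatorname{End}_{U_q}=\C(q)$, applying the density theorem once to the multiplicity-free semisimple module $\bigoplus_{l\in L(\lambda)}V_m(l)$, whose commutant is $\prod_l\C(q)$, already gives surjectivity onto the direct sum without invoking the quantum Harish--Chandra homomorphism; also, in the non-unital idempotented algebra one should work with the truncated central elements $z1_\mu$). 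The caveat is your injectivity step: the assertion that $\dot U_q(\mathfrak{gl}(m))/I_\lambda$ \emph{is} the generalised $q$-Schur algebra, finite-dimensional and semisimple with simples exactly the $V_m(l)$, $l\in L(\lambda)$, is Doty's presentation theorem \cite{Doty2003}, which (given your surjectivity) is essentially equivalent to the lemma itself; so your proof does not eliminate the external input, it merely relocates the citation from \cite{Cautis2012} to \cite{Doty2003} --- the same input that underlies the cited lemma and that the paper itself alludes to when it calls these quotients generalised $q$-Schur algebras. Your closing remark points at the honest way to close this gap: a dimension count using Lusztig's canonical basis (the ideal $I_\lambda$ is spanned by the canonical basis elements it contains, and the surviving elements are counted by $\sum_{l\in L(\lambda)}(\dim V_m(l))^2$), which is in effect how the presentation theorem is proved. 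So: correct in outline, a genuinely different write-up than the paper's one-line citation, but the hard kernel of the statement is still being quoted rather than proved.
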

\begin{proof}
This is Lemma 4.4.2 in \cite{Cautis2012}.
\end{proof}

\begin{thm}\label{th:faithful}
There is an induced functor
\[ \phi_m:\dot{U}^\Xi_q(\mathfrak{gl}(m)) \to \operatorname{Rep} \]
which is fully faithful.
\end{thm}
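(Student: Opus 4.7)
The plan is to verify three things: (i) that $\phi_m$ descends to $\dot{U}^\Xi_q(\mathfrak{gl}(m))$, (ii) that the induced functor is full, and (iii) that it is faithful. For (i), the lemma at the end of Section~\ref{se:additional} already shows that the generating relation of $\Xi$ is annihilated by $\phi_m$, so $\phi_m$ factors through the quotient. Fullness, claim (ii), is immediate from Corollary~\ref{co:full}: fullness of $\phi$ on the direct sum $\bigoplus_m \dot{U}_q^\infty(\mathfrak{gl}(m))$ restricts to fullness of each component $\phi_m$, and passing further to the quotient by $\Xi$ preserves this.

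For faithfulness, observe that $1_\mu \dot{U}_q(\mathfrak{gl}(m))1_\lambda$ vanishes unless $|\mu|=|\lambda|=K$ for some $K\in\N$, so it suffices to check faithfulness at each fixed total weight $K$. The weight-$K$ part of $\dot{U}_q^\infty(\mathfrak{gl}(m))$ is precisely the generalised $q$-Schur algebra $\dot{U}_q(\mathfrak{gl}(m))/I_{(K,0,\ldots,0)}$, which by Lemma~\ref{algiso} splits as $\bigoplus_\lambda \operatorname{End}_{\C(q)}(V_m(\lambda))$ with $\lambda$ ranging over partitions of $K$ having at most $m$ parts. On the other side, Theorem~\ref{th:skewhoweduality} gives
\[
\bigwedge^K_q(\C_q^{1|1}\otimes \C_q^m) \cong \bigoplus_{\lambda\in H,\,|\lambda|=K} V_{(1|1)}(\lambda^t)\otimes V_m(\lambda),
\]
where $H$ denotes the set of hook-shaped partitions, and the two actions generate each other's commutant. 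By the double centraliser theorem, the image of $\phi_m$ at weight $K$ is exactly $\bigoplus_{\lambda\in H,\,|\lambda|=K}\operatorname{End}(V_m(\lambda))$ sitting inside the decomposition from Lemma~\ref{algiso}, so the kernel of $\phi_m$ there is the complementary ideal $\bigoplus_{\lambda\notin H,\,|\lambda|=K}\operatorname{End}(V_m(\lambda))$.

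It remains to show that the image of $\Xi$ in this quotient equals the non-hook ideal. One inclusion is handled by (i). For the reverse, since each $\operatorname{End}(V_m(\lambda))$ is a simple matrix algebra, it suffices to show that for every non-hook $\lambda$ with $|\lambda|=K$ and $\ell(\lambda)\leq m$, the image of $\Xi$ has a non-zero component in $\operatorname{End}(V_m(\lambda))$. I expect this to be the main obstacle. A natural approach is to pick an index $i$ with $\lambda_i,\lambda_{i+1}\geq 2$, which exists precisely because $\lambda$ is non-hook, and place the simplest instance of the generating relation (with $k=l=2$ and $t=s=1$) on the $i$th and $(i+1)$th uprights, using identity morphisms elsewhere; one then checks by a direct highest-weight computation that the resulting element has non-trivial image in $\operatorname{End}(V_m(\lambda))$. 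This essentially reduces the problem to the base case $m=2$, $\lambda=(2,2)$, where the derived relation~\ref{eq:nonmoy2} can be inspected explicitly, and the general case follows by locality of the relation within the Young diagram of $\lambda$.
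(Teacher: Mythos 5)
Your overall skeleton (descend via the lemma of Section~\ref{se:additional}, fullness from Corollary~\ref{co:full}, then reduce faithfulness to the generalised $q$-Schur algebras via Lemma~\ref{algiso} and Theorem~\ref{th:skewhoweduality}, and use simplicity of each $\operatorname{End}_{\C(q)}(V_m(\lambda))$ to reduce to exhibiting an element of $\Xi$ not vanishing on $V_m(\lambda)$) is exactly the paper's strategy. The gap is in your final step, the choice of witness. You propose to use only the instance of the relation with $k=l=2$, $t=s=1$, placed on uprights $i,i+1$ with identities elsewhere, and to "reduce to the base case $\lambda=(2,2)$ by locality". This cannot work as stated, for a weight/block reason: the element you describe is of the form $1_\mu x 1_\mu$ with $\mu_i=\mu_{i+1}=2$, and since $1_\alpha\dot{U}_q(\mathfrak{gl}(m))1_\beta=0$ unless $|\alpha|=|\beta|$, the two-sided ideal it generates lives entirely in the block $|\mu|$. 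If the other uprights carry $\lambda_j$, then $|\mu|=|\lambda|-\lambda_i-\lambda_{i+1}+4\neq|\lambda|$ in general, so its image in $\operatorname{End}_{\C(q)}(V_m(\lambda))$ is zero. The failure is unavoidable for $m=2$: for $\lambda=(3,3)$ the only $k=l=2$ placement sits in block $4$, while $\operatorname{End}_{\C(q)}(V_2((3,3)))$ sits in block $6$, so no amount of multiplying by other elements of the algebra reaches it. Even for larger $m$, where you could relabel the spectator uprights to fix the block, $\mu$ is then no longer the highest weight of $V_m(\lambda)$, so the "direct highest-weight computation" and the appeal to locality are not actually available; some genuine argument would still be needed.

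The paper avoids all of this by letting the relation instance depend on $\lambda$: for a non-hook dominant $\nu$ with $\nu_1=k\geq\nu_2=l\geq 2$, it takes the derived two-upright relation~\ref{eq:nonmoy2} with precisely these $k,l$ (so only $t=s=1$ is fixed, not $k=l=2$), placed on the first two uprights with the remaining uprights labelled $\nu_3,\ldots,\nu_m$. This element lies in $1_\nu\,\dot{U}_q(\mathfrak{gl}(m))\,1_\nu$, and on $V_m(\nu)$ the two rung terms factor through the weights $(k+1,l-1,\nu_3,\ldots)$ and $(k+2,l-2,\nu_3,\ldots)$, which are strictly above $\nu$ and hence have zero weight space in $V_m(\nu)$; so the element acts as the nonzero scalar $[k+1][k][l][l-1]$ on the highest-weight space. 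This gives the required non-vanishing element of $\Xi$ for every non-hook $\nu$ in one stroke, with no induction or locality argument. If you repair your last step along these lines (note the paper's remark that only $t=s=1$, but arbitrary $k,l\geq 2$, is needed), the rest of your argument goes through and coincides with the paper's proof.
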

\begin{proof}
By Lemma \ref{algiso}, we have an isomorphism
\[ \dot{U}_q(\mathfrak{gl}(m))/I_{(K,0,\ldots,0)} \to \bigoplus_{l\in L((K,0,\ldots,0))}\operatorname{End}_{\C(q)}(V_m(l)). \] 
But by Theorem \ref{th:skewhoweduality},
\[ \operatorname{Hom}_{U_q(\mathfrak{gl}(1|1))}\left(\bigwedge_q (\C^{1|1}_q \otimes \C^m_q),\bigwedge_q (\C^{1|1}_q \otimes \C^m_q)\right) = \bigoplus_{\mu\in H}\operatorname{End}_{\C(q)}(V_m(\mu))) \]
where $H$ is the set of dominant weights $\mu$ such that $\mu_2\leq 1$, since the same decomposition in equation \ref{eq:weightdecomposition} holds in the quantum case due to Lemma \ref{le:flat}. By Section \ref{se:additional}, we know that the projection
\[ \dot{U}_q(\mathfrak{gl}(m))/I_{(K,0,\ldots,0)}\cong \bigoplus_{l\in L((K,0,\ldots,0))}\operatorname{End}_{\C(q)}(V_m(l)) \to \bigoplus_{\mu\in H,|\mu|=K} \operatorname{End}_{\C(q)}(V_m(\mu)) \]
descends to a surjection
\[ \frac{\dot{U}_q(\mathfrak{gl}(m))}{I_{(K,0,\ldots,0)}+\Xi} \to \bigoplus_{\mu\in H,|\mu|=K} \operatorname{End}_{\C(q)}(V_m(\mu)). \]

We wish to show this is injective. For this it suffices to show that for each $\nu\leq (K,0,\ldots,0)$ with $\nu\not\in H$, the projection to $\operatorname{End}_{\C(q)}(V_m(\nu))$ belongs to $\Xi$. In fact, since the matrix algebra $\operatorname{End}_{\C(q)}(V_m(\nu))$ is simple, it is enough to show there is an element $r\in \Xi$ such that $r$ does not vanish on $V_m(\nu)$.

Let $\nu$ be a dominant weight with $\nu_2>1$, and suppose $\nu_1=k$, $\nu_2=l$. Then the element
\[ \begin{tikzpicture}[baseline=-0.65ex,scale=0.5]
\draw (-0.5,-1) -- (-0.5,1);
\draw (0.5,-1) -- (0.5,1);
\draw (1.5,0) node {$\cdots$};
\coordinate[label=left:$k$] (k) at (-0.5,0.5);
\coordinate[label=right:$l$] (l) at (0.5,0.5);
\end{tikzpicture} \]
is the identity on $V_m(\nu)$, and so the element of $\Xi$ shown in equation \ref{eq:nonmoy2} with $m-2$ uprights added to the right-hand side acts as $[k+1][k][l][l-1]$ on $V_m(\nu)$ as both other terms factor through weights higher than $\nu$, and so act as $0$ on $V_m(\nu)$. Hence this exhibits an element of $\Xi$ acting non-trivially on $V_m(\nu)$, and so the claim is proved.
\end{proof}

\begin{cor}\label{co:threecategories}
There is a full functor \[\bigoplus_m \operatorname{Lad}_m \to \operatorname{Rep}\]
and a fully faithful functor
\[\bigoplus_m \operatorname{Lad}^\Xi_m \to \operatorname{Rep}. \]
\end{cor}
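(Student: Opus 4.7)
The corollary is essentially a bookkeeping statement: both functors in the conclusion are obtained by composing an equivalence on the ladder side with the appropriate functor out of the idempotented quantum group, so the plan is to assemble the earlier results rather than to prove anything genuinely new.

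For the first claim, I would form the composition
\[
\bigoplus_m \operatorname{Lad}_m \xrightarrow{\ \bigoplus_m \psi_m\ } \bigoplus_m \dot{U}_q^\infty(\mathfrak{gl}(m)) \xrightarrow{\ \phi\ } \operatorname{Rep},
\]
where the first arrow is the direct sum of the equivalences of Corollary \ref{co:lad} and the second arrow is the full functor of Corollary \ref{co:full}. Fullness of a composition with an equivalence (which is in particular essentially surjective and full) follows immediately from fullness of $\phi$, giving the first statement.

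For the second claim, the same strategy works one level further down. By construction $\operatorname{Lad}^\Xi_m$ is the quotient of $\operatorname{Lad}_m$ by the extra relation $\Xi$ described in Section \ref{se:additional} (with possibly trivial uprights added on either side), and $\dot{U}_q^\Xi(\mathfrak{gl}(m))$ is defined precisely so that the induced functor $\psi_m\colon \operatorname{Lad}^\Xi_m \to \dot{U}_q^\Xi(\mathfrak{gl}(m))$ is an equivalence of categories. Composing with the functor $\phi_m\colon \dot{U}_q^\Xi(\mathfrak{gl}(m)) \to \operatorname{Rep}$ given by Theorem \ref{th:faithful}, which is fully faithful, one obtains a fully faithful functor
\[
\bigoplus_m \operatorname{Lad}^\Xi_m \longrightarrow \operatorname{Rep},
\]
since fully faithful functors are closed under composition and direct sums indexed by $m$ (the hom-spaces between different summands are zero on both sides, as only $1_\lambda$ with $|\lambda|$ fixed can map between one another).

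There is no real obstacle left at this stage: everything non-trivial has been shouldered by Theorem \ref{th:faithful} (where the non-MOY relation $\Xi$ is shown to generate the entire kernel of $\phi$) and by the ladder presentation theorems (Theorem \ref{th:ladders} and Corollary \ref{co:lad}). The only minor point worth remarking on is that the quotient defining $\dot{U}_q^\Xi(\mathfrak{gl}(m))$ is compatible with the quotient defining $\operatorname{Lad}^\Xi_m$, which follows from the explicit diagrammatic form of the relation $\Xi$ and the fact that $\psi_m$ carries this diagram to a generator of the ideal $\Xi \subset \dot U_q^\infty(\mathfrak{gl}(m))$.
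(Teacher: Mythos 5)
Your proposal is correct and matches the paper's own argument, which likewise deduces the corollary directly by composing the equivalences of Corollary \ref{co:lad} (and its $\Xi$-quotient analogue) with the full functor of Corollary \ref{co:full} and the fully faithful functor of Theorem \ref{th:faithful}. Your added remarks on compatibility of the two quotients and on hom-spaces between different summands are just explicit spellings-out of what the paper leaves implicit.
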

\begin{proof}
This follows directly from Corollary \ref{co:full}, Corollary \ref{co:lad} and Theorem \ref{th:faithful}.
\end{proof}

It is also interesting to note the following:

\begin{cor}\label{co:equivalent}
The non-MOY relation in Section \ref{se:additional} is equivalent to the relation in equation \ref{eq:nonmoy2}.
\end{cor}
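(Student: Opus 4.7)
The plan is to show that the two-sided ideals generated in $\dot{U}_q^\infty(\mathfrak{gl}(m))$ by the non-MOY relation \eqref{eqn:nonmoy} and by \eqref{eq:nonmoy2} coincide. Call these ideals $\Xi$ and $\Xi'$ respectively.

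First I would note that one inclusion $\Xi' \subseteq \Xi$ is already established in the paper. Indeed, the derivation immediately preceding the definition of $\operatorname{Lad}^\Xi_m$ produces \eqref{eq:nonmoy2} by taking the $t=s=1$ case of \eqref{eqn:nonmoy}, attaching the merging diagram to the top of each of the four terms, and then simplifying with the MOY relations and the explicit quantum-integer identities. Consequently, any relation in $\Xi'$ lies in $\Xi$.

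For the reverse inclusion $\Xi \subseteq \Xi'$, the key observation is that the proof of Theorem \ref{th:faithful} only ever invokes the particular derived relation \eqref{eq:nonmoy2}: the non-triviality argument for each dominant weight $\nu$ with $\nu_2 > 1$ was produced by exhibiting the specific element \eqref{eq:nonmoy2} (with extra trivial uprights) acting as the nonzero scalar $[k+1][k][l][l-1]$ on $V_m(\nu)$. Rerunning that argument verbatim with $\Xi'$ in place of $\Xi$ shows that the induced map
\[ \frac{\dot{U}_q(\mathfrak{gl}(m))}{I_{(K,0,\ldots,0)}+\Xi'} \longrightarrow \bigoplus_{\mu\in H,\,|\mu|=K} \operatorname{End}_{\C(q)}(V_m(\mu)) \]
is still an isomorphism, so $\phi_m$ descends to a faithful functor $\dot{U}_q^\infty(\mathfrak{gl}(m))/\Xi' \to \operatorname{Rep}$. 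Since the non-MOY relation \eqref{eqn:nonmoy} is itself satisfied in $\operatorname{Rep}$ by the lemma of Section \ref{se:additional}, it must already lie in $\Xi'$. Together with the inclusion of the previous paragraph this gives $\Xi = \Xi'$, which is exactly the equivalence asserted.

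The step I expect to be the main (conceptual) obstacle is making precise that the proof of Theorem \ref{th:faithful} really does not require the full strength of \eqref{eqn:nonmoy} but only the derived ladder relation \eqref{eq:nonmoy2}; once that is articulated cleanly, both inclusions reduce to direct citations. In particular, no new computation with the $v_i,w_i$ basis is needed, and no appeal to the general $t,s \geq 1$ cases of the non-MOY relation is required beyond what is already proved.
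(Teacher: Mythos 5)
Your proposal is correct and follows essentially the same route as the paper: one direction is the derivation of \eqref{eq:nonmoy2} from \eqref{eqn:nonmoy} already carried out before the definition of $\operatorname{Lad}^\Xi_m$, and the converse uses the observation that the proof of Theorem \ref{th:faithful} only invokes the derived relation, so faithfulness of the resulting functor to $\operatorname{Rep}$ forces the non-MOY relation to lie in the ideal it generates. Your version merely makes the ideal-theoretic bookkeeping ($\Xi$ versus $\Xi'$) explicit, which is a fair elaboration of the paper's argument rather than a different one.
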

\begin{proof}
We have seen that the non-MOY relation implies the derived relation. To see the converse, we note that the proof of Theorem \ref{th:faithful} only required the derived relation. It thus follows that the ladder relations with the derived relation are also equivalent to relations in $\operatorname{Rep}$, and since the non-MOY relation is a relation in $\operatorname{Rep}$, it must be a consequence of the other relations.
\end{proof}

Since ladders are trivalent graphs we can interpret them as MOY diagrams. One then notices that the relations on ladder diagrams agree with the relations on MOY diagrams in Section \ref{se:moymoves} where both are defined. We note in particular that the resolutions of braid diagrams can always be written as ladders, for which we now have several nice interpretations.

\subsection{Braiding}\label{se:braiding}
There is a braiding on $\operatorname{Rep}$ defined by the $R$-matrix of $U_q(\mathfrak{gl}(1|1))$. Since we now have full functors $\operatorname{Rep}$ from the equivalent categories $\dot{U}^\infty_q(\mathfrak{gl}_\bullet)=\bigoplus_m(\dot{U}^\infty_q(\mathfrak{gl}(m)))$ and $\operatorname{Lad}=\bigoplus_m \operatorname{Lad}_m$, it is interesting to ask if the braiding occurs in these other categories.

Recall that a braided monoidal category is a monoidal category with a natural isomorphism from the bifunctor $-\otimes -$ to $-\otimes^{op}-$ (ie. a natural system of isomorphisms $\beta_{U,V}:U\otimes V \to V\otimes U$), satisfying the two equations
\[ \beta_{U\otimes V,W}=(\beta_{U,W}\otimes \id_V)\circ (\id_U\otimes \beta_{V,W}) \]
\[ \beta_{U,V\otimes W}= (\id_V\otimes \beta_{U,W})\circ(\beta_{U,V}\otimes \id_W) \]
for any $U,V,W$, called the hexagon equations.

Following \cite{Cautis2012} and \cite{Lusztig1993}, we can define the morphism
\[ 1_{s_i(k)}T_i1_k=(-1)^{k_ik_{i+1}} \sum_{\mathclap{\substack{r,s\geq 0 \\ r-s=k_i-k_{i+1}}}} (-q)^{k_{i}-s} E_i^{(s)}F_i^{(r)} 1_k \in 1_{s_i(k)}\dot{U}^\infty_q(\mathfrak{gl}(m))1_k \]
where $s_i(k)=(k_1,\ldots,k_{i+1},k_i,\ldots,k_m)$ with the $i$th and $(i+1)$th coordinates swapped. Note that this sum is actually finite, since $F_i^{(r)}1_{k}=0$ for large $r$.

By \cite[Section~39]{Lusztig1993}, these elements satisfy
\[ T_iT_{i+1}T_i1_k=T_{i+1}T_iT_{i+1}1_k \]
\[ T_i T_j1_k = T_j T_i 1_k \quad \text{if } |i-j|>2 \]
which are the braid relations. Also, each $T_i$ is invertible.

If we define the structure of a monoidal category on $\dot{U}^\infty_q(\mathfrak{gl}_\bullet)$ by setting
\[ 1_{(k_1,\ldots,k_s)}\otimes 1_{(l_1,\ldots,l_t)} := 1_{(k_1,\ldots,k_s,l_1,\ldots,l_t)} \]
then we have the following
\begin{thm}\label{th:braiding}
The $T_i$ define the structure of a braided monoidal category on $\dot{U}^\infty_q(\mathfrak{gl}_\bullet)$.
\end{thm}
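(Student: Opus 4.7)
The plan is to define, for each pair of objects $1_k$ with $k=(k_1,\ldots,k_s)$ and $1_l$ with $l=(l_1,\ldots,l_t)$, the braiding morphism
\[
\beta_{1_k,1_l}\colon 1_k\otimes 1_l = 1_{(k_1,\ldots,k_s,l_1,\ldots,l_t)} \longrightarrow 1_{(l_1,\ldots,l_t,k_1,\ldots,k_s)} = 1_l\otimes 1_k
\]
to be the morphism $T_{w_{s,t}}1_{(k,l)}$ in $\dot U_q^\infty(\mathfrak{gl}(s+t))$, where $T_{w_{s,t}}$ is the product of basic $T_i$'s corresponding to any reduced expression for the shuffle permutation $w_{s,t}\in S_{s+t}$ that moves the first block of $s$ strands past the block of $t$ strands. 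Independence of this product on the choice of reduced expression is precisely the content of the braid relations already cited from \cite{Lusztig1993}, and invertibility of each $T_i$ gives invertibility of $\beta_{1_k,1_l}$. The passage to the quotient $\dot U_q^\infty$ is unproblematic because each summand $E_i^{(s)}F_i^{(r)}1_k$ in the Lusztig formula for $T_i1_k$ factors through an idempotent $1_{k-r\alpha_i}$, and those with a negative entry are already $0$ in the quotient.

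Naturality amounts to verifying, for any morphism $f\colon 1_k\to 1_{k'}$ in $\dot U_q^\infty(\mathfrak{gl}(s))$ and the analogous right-factor case, the identity
\[
\beta_{1_{k'},1_l}\circ (f\otimes 1_l) = (1_l\otimes f)\circ \beta_{1_k,1_l}.
\]
Since morphism spaces in $\dot U_q^\infty(\mathfrak{gl}(s))$ are generated by the divided powers $E_i^{(r)}1_\mu$ and $F_i^{(r)}1_\mu$, it suffices to check this on those generators. The key point is that conjugation by $T_{w_{s,t}}$ realises the permutation $w_{s,t}$ at the level of simple roots: for $1\leq i\leq s-1$ one has $w_{s,t}(\alpha_i)=\alpha_{i+t}$, so $T_{w_{s,t}}E_i^{(r)}1_{(k,l)}$ should equal $E_{i+t}^{(r)}T_{w_{s,t}}1_{(k,l)}$ and similarly for $F_i^{(r)}$. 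This is established by iterating Lusztig's conjugation identities $T_jE_i=E_iT_j$ for $|i-j|>1$ along a chosen reduced word, together with the relations expressing $T_jE_{j\pm 1}T_j^{-1}$ in terms of adjacent generators, so that the successive Weyl-group moves telescope into the required index shift.

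The two hexagon equations then reduce to a statement within the symmetric group: both sides of each hexagon correspond to the same permutation of $s+t+u$ strands written as two different products of adjacent transpositions, and the braid relations for the $T_i$ identify the two composites in $\dot U_q^\infty(\mathfrak{gl}(s+t+u))$. The main obstacle is the naturality step: careful bookkeeping is needed to control the signs and quantum-integer factors produced by the Lusztig formula for $T_i$ when it is commuted through a generator supported many indices away. One way to organise this is to reduce, via the braid relations, to the case of a single adjacent transposition of two uprights, where it is precisely the Weyl-group action of $T_i$ on $U_q(\mathfrak{gl}(s+t))$ recorded in \cite{Lusztig1993}; the general case then follows by induction on the length of $w_{s,t}$.
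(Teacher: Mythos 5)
Your proposal is correct and is essentially the paper's own argument: the braiding is defined as $T_w$ along a reduced word for the block-shuffle permutation, well-definedness follows from the braid relations, naturality from the conjugation identities $T_wE_i^{(r)}1_{(k,l)}=E_{i+t}^{(r)}T_w1_{(k,l)}$ (and likewise for $F_i^{(r)}$), and the hexagons from the fact that lengths of shuffles add so the concatenated reduced words are again reduced; the paper simply delegates these verifications to a straightforward generalisation of Lemma 6.1.3 and Theorem 6.1.4 of Cautis--Kamnitzer--Morrison. The one point your sign-bookkeeping remark should make explicit is that the paper's $T_i$ carries the extra super-sign $(-1)^{k_ik_{i+1}}$ absent from Lusztig's operators, but this sign depends only on the entries of the weight being transposed and is unaffected by applying $E_i^{(r)}$ or $F_i^{(r)}$ within a block, so it passes through the braid relations, the conjugation identities, and the hexagon factorisation unchanged.
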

\begin{proof}
The $T_i$ define a map $1_{k}\to 1_{s_i(k)}$, which transposes two entries in $k$. For any permutation $w\in S_m$, we can define an element $T_w$ taking $1_k \to 1_{w(k)}$ by composing the tranpositions $T_i$ minimising word length. So if we define the braiding between $1_k$ and $1_l$ to be
\[ T_w 1_{(k_1,\ldots,k_s,l_1,\ldots,l_t)} \]
where $w\in S_{s+t}$ is the permutation that swaps the first $s$ and last $t$ entries, sending $(k_1,\ldots,k_s,l_1,\ldots,l_t)$ to $(l_1,\ldots,l_t,k_1,\ldots,k_s)$, then a straightforward generalisation of
\cite[Lemma~6.1.3]{Cautis2012} and \cite[Theorem~6.1.4]{Cautis2012} gives the result.
\end{proof}

\begin{thm}
The functor $\phi:\dot{U}^\infty_q(\mathfrak{gl}_\bullet) \to \operatorname{Rep}$ preserves the braiding.
\end{thm}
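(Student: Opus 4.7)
The plan is to exploit the ``multiplicativity'' of both braidings under the hexagon axioms to reduce the verification to the single case $(k,l) = (1,1)$, after which the comparison is a direct calculation. More precisely, for any $1 \leq i < m$ and any weight $(\ldots,k,l,\ldots)$, let $R_{i,i+1}$ denote the $R$-matrix acting on the $i$th and $(i+1)$th tensor factors of $\bigwedge^{k_1}_q \C^{1|1}_q \otimes \cdots \otimes \bigwedge^{k_m}_q \C^{1|1}_q$. The hexagon relations satisfied by the $R$-matrix (on the $\operatorname{Rep}$ side) and by the $T_i$'s (on the $\dot U^\infty_q(\mathfrak{gl}_\bullet)$ side, Theorem \ref{th:braiding}) together assemble the full braiding $T_w$ from simple transpositions, so it suffices to check
\[ \phi(T_i 1_{(\ldots,k,l,\ldots)}) \;=\; R_{i,i+1} \]
for every adjacent pair of colours $(k,l)$.

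I would reduce this further to $(k,l) = (1,1)$ by naturality. The $R$-matrix of $U_q(\mathfrak{gl}(1|1))$ is natural with respect to any $U_q(\mathfrak{gl}(1|1))$-equivariant morphism, so it intertwines with the coproducts $M'_{a,b}$ in the usual sense. On the other side, the corresponding commutation of Lusztig's braid operator $T_i$ with the divided-power elements $F_j^{(r)}, E_j^{(r)}$ is standard (see \cite[Section~39]{Lusztig1993}), and under $\psi_m$ it translates into compatibility of $T_i$ with the ladder rungs realising $M$ and $M'$. Iterating the coproduct $M'_{1,k-1}$ one thereby reduces the identification of $\phi(T_i 1_{(k,l)})$ with $R_{i,i+1}$ to the case in which both strands are $1$-coloured. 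This is the strategy of \cite[Lemma~6.1.3 and Theorem~6.1.4]{Cautis2012}, and the argument adapts verbatim since $T_i$ lives entirely inside the ordinary quantum group $\dot U_q(\mathfrak{gl}(m))$.

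For the base case, I would compute directly. When $k=l=1$, the expression $1_{s_i(k)} T_i 1_k$ reduces to only the summands $s = 0, 1$, since $F_i^{(s)} 1_{(1,1)} = 0$ for $s \geq 2$; thus
\[ T_i 1_{(1,1)} \;=\; q \cdot 1_{(1,1)} \;-\; E_i F_i 1_{(1,1)}. \]
Lemma \ref{le:actionofglm} identifies $F_i 1_{(1,1)}$ with the wedge $M_{1,1}: \C^{1|1}_q \otimes \C^{1|1}_q \to \bigwedge^2_q \C^{1|1}_q$ and $E_i$ on weight $(0,2)$ with the coproduct $M'_{1,1}$. Evaluating $q \cdot \id - M'_{1,1} \circ M_{1,1}$ on the four basis vectors $v\otimes v, v\otimes w, w\otimes v, w\otimes w$ using the explicit formulas from Section 3.3 reproduces exactly the action of the $R$-matrix recorded in Section \ref{se:representationtheory}; for instance $v\otimes v \mapsto q v\otimes v$ and $w\otimes w \mapsto (q - [2])\, w\otimes w = -q^{-1} w\otimes w$, and the cross terms exchange up to the expected $(q-q^{-1})$ correction.

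The main obstacle is the middle step. While the base computation and the hexagon reduction to adjacent transpositions are short, checking the compatibility of $T_i$ with the full collection of splitting morphisms $M_{\bullet,\bullet}, M'_{\bullet,\bullet}$ amounts to a sequence of commutative-diagram verifications. These are exactly the diagrams analysed in \cite[Section~6.1]{Cautis2012}, and no essentially new phenomena arise from the super-structure of $U_q(\mathfrak{gl}(1|1))$, because the operators $T_i$ and the divided powers $E_j^{(r)}, F_j^{(r)}$ are all elements of the ordinary (non-super) quantum group $\dot U_q(\mathfrak{gl}(m))$.
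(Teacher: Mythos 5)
Your proposal is correct and follows essentially the same route as the paper's own proof: a direct check that $\phi(T_i 1_{(1,1)}) = q\,\id - M'_{1,1}\circ M_{1,1}$ agrees with the $R$-matrix on $\C^{1|1}_q\otimes\C^{1|1}_q$, followed by naturality of both braidings with respect to the (injective) splitting morphisms relating $\bigwedge^k_q\C^{1|1}_q$ to $(\C^{1|1}_q)^{\otimes k}$, and the hexagon relations to assemble the general case, all deferring to \cite[Section~6.1]{Cautis2012} for the intertwining of the $T_i$ with the divided powers. The only cosmetic difference is the order of the reduction (you go from arbitrary colours down to $(1,1)$ via iterated coproducts, while the paper goes up from $1$-coloured strands via the maps $\psi_k$ and injectivity), which is not a substantive change.
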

\begin{proof}
This is essentially the argument from \cite{Cautis2012}. A straightforward verification from Section \ref{se:representationtheory} shows that $\phi(T1_{(1,1)})=R_{\C_q^{1|1},\C_q^{1|1}}$, the $R$-matrix $\C_q^{1|1}\otimes \C_q^{1|1}\to \C_q^{1|1}\otimes \C^{1|1}_q$. It follows from the hexagon relations that $\phi(T_w1_{(1^k,1^l)})=R_{(\C_q^{1|1})^{\otimes k},(\C_q^{1|1})^{\otimes l}}$ where $1^k=(1,\ldots,1)$ has $k$ terms. To determine the braiding on $\bigwedge^k_q (\C_q^{1|1})$ and $\bigwedge^l_q(\C_q^{1|1})$, we use the injective map
\[ \psi_k:=\phi(F_1F_2F_1F_3F_2F_1\cdots F_{k-1}F_{k-2}\cdots F_1 1_{(k,0^{k-1})}):\bigwedge^k_q (\C_q^{1|1}) \to (\C_q^{1|1})^{\otimes k}.\]
By naturality of the braidings, we have
\[ R_{((\C_q^{1|1})^{\otimes k},(\C_q^{1|1})^{\otimes l})}\circ (\psi_k\otimes \psi_l)= (\psi_l\otimes \psi_k)\circ R_{(\bigwedge_q^k \C_q^{1|1},\bigwedge_q^l \C_q^{1|1})} \]
and
\[ \phi(T_w1_{(1^k,1^l)})\circ (\psi_k\otimes \psi_l) = (\psi_l\otimes \psi_k)\circ \phi(T_w 1_{((k,0^{k-1}),(l,0^{l-1}))}). \]
Since $\phi(T_w1_{(1^k,1^l)})=R_{(\C_q^{1|1})^{\otimes k},(\C_q^{1|1})^{\otimes l}}$, the left-hand terms of these two equations are equal, and therefore so are the right-hand terms. Since $\psi_l\otimes \psi_k$ is injective, \[\phi(T_w 1_{((k,0^{k-1}),(l,0^{l-1}))}) =   R_{(\bigwedge_q^k \C_q^{1|1},\bigwedge_q^l \C_q^{1|1})} \]
and by the hexagon relations it follows that the braidings are equal on all of $\operatorname{Rep}$.
\end{proof}

Of course, it is possible to define the same braiding on $\dot{U}^\Xi_q(\mathfrak{gl}_\bullet)$ in the obvious way, and then the functor $\dot{U}^\Xi_q(\mathfrak{gl}_\bullet) \to \operatorname{Rep}$ also preserves the braiding by definition. This shows that the equivalence of $\operatorname{Rep}$ with our ladder category $\bigoplus_m\operatorname{Lad}^\Xi_m$ allows us to express the $R$-matrix in a simple way using sums of a ladder diagrams as defined by the $T_i$. In this way, we recover the expression for the braiding on strands coloured with the standard representation of $\dot U_q(\mathfrak{gl}(1|1)$ as given in figure \ref{fi:knot}, and the braiding for higher colours is similarly given by
\[\begin{tikzpicture}[baseline=-0.65ex]
\coordinate [label={below:$k_1$}] (f) at (-1,-1);
\coordinate [label={below:$k_2$}] (s) at (1,-1);
\draw[->] (-1,-1) -- (1,1);
\draw (1,-1) -- (0.25, -0.25);
\draw[<-] (-1,1) -- (-0.25,0.25);
\end{tikzpicture} = (-1)^{k_1k_2}\sum_{\mathclap{\substack{r,s\geq 0 \\ r-s=k_1-k_2}}} (-q)^{k_2-s}\left(\begin{tikzpicture}[baseline=-0.65ex]
\draw (-0.5,-1) -- (-0.5,1);
\draw (0.5,-1) -- (0.5,1);
\draw (-0.5,-0.6) -- (0.5,-0.2);
\draw (0.5,0.2) -- (-0.5,0.6);
\draw (-0.5,-1.25) node {$k_1$};
\draw (0.5,-1.25) node {$k_2$};
\draw (-0.5,1.25) node {$k_2$};
\draw (0.5,1.25) node {$k_1$};
\draw (0,-0.6) node {$r$};
\draw (0,0.6) node {$s$};
\draw (-1.1,0) node {$k_1-r$};
\draw (1.1,0) node {$k_2+r$};
\end{tikzpicture}\right) \]
just by using the expression for $T_11_{(k_1,k_2)}$ in ladder form.

\printbibliography
\end{document}